\documentclass[11pt]{amsart}
\makeatletter
\let\th@theorem\th@plain
\makeatother
\usepackage{amsfonts, amsmath, amssymb, amsthm}
\usepackage{anysize}
\usepackage{mathtools}
\marginsize{2cm}{2cm}{1.5cm}{1.5cm}

\usepackage[utf8]{inputenc}
\usepackage[english]{babel}
\usepackage{cmap}
\usepackage{enumerate}

\usepackage{xcolor}
\usepackage{hyperref}
\hypersetup{
	colorlinks   = true, 
	urlcolor     = blue, 
	linkcolor    = blue, 
	citecolor    = red 
}
\newcommand{\Href}[2]{\hyperref[#2]{#1~\ref{#2}}}

\usepackage{cancel}

\theoremstyle{theorem}
\newtheorem{thm}{Theorem}[section]
\newtheorem{prp}{Proposition}[section]
\newtheorem{lem}{Lemma}[section]
\newtheorem{cor}{Corollary}[section]
\newtheorem{conj}{Conjecture}[section]

\theoremstyle{definition}
\newtheorem{dfn}{Definition}[section]

\newcommand{\norm}[1]{\left\|#1\right\|}

\newcommand{\enorm}[1]{\left|#1\right|}

\DeclareMathOperator{\tr}{\mathrm{trace}}

\providecommand{\parenth}[1]{\left(#1\right)}%
\providecommand{\braces}[1]{\left\{#1\right\}}%

\newcommand{\conv}{\mathrm{conv}}%
\newcommand{\cof}[1]{\conv \! \braces{#1}}
\newcommand{\iprod}[2]{\left\langle#1, #2\right\rangle}%

\def\R{{\mathbb R}}%
\def\C{{\mathbb C}}%

\def\phi{\varphi}
\def\epsilon{\varepsilon}

\newcommand{\id}{\mathrm{Id}}
\newcommand{\prooff}{\noindent {\bf Proof.}\\}%
\newcommand{\bbox}{\par\noindent\ensuremath{\Box}\par\noindent}%

\newcommand{\di}{\,\mathrm{d}}

\newcommand{\romenc}[1]{\uppercase\expandafter{\romannumeral #1\relax}}%

\providecommand{\abs}[1]{\lvert#1\rvert}%

\newcommand{\ball}[1]{\mathbf{B}^{#1}}

\usepackage[expansion=false]{microtype}
\hypersetup{pdftitle={Halfspaces containing a ball},pdfauthor={Grigory Ivanov},
 pdfsubject={Gaussian orthant probabilities and a sharp radius bound}}
\newcommand{\one}{\mathbf{1}}
\newcommand{\vertices}{\operatorname{vert}}
\newcommand{\HS}{\mathrm{HS}}
\numberwithin{equation}{section}

\title[Zong's problem on half-spaces]
{Zong's problem on $2d$ half-spaces containing the unit ball}

\subjclass[2020]{Primary 52A40; Secondary 52B11, 60E15}

\keywords{Half-spaces, convex polytopes, spherical coverings,
Gaussian orthant probabilities, tight frames}

\hypersetup{
    pdftitle={Zong's problem on 2d half-spaces containing the unit ball}
}
\author{Grigory Ivanov}
\date{}

\begin{document}
\begin{abstract}
We prove that the intersection of $2d$ halfspaces containing the Euclidean
unit ball in $\R^d$ contains a point of norm at least $\sqrt{d}$.
Equality characterizes the cube. The proof gives a lower bound for the
average squared norm of the vertices, weighted by the Gaussian measures
of their normal cones. The main step is an inequality for Gaussian orthant
probabilities, proved by a one-dimensional comparison of mass, centroid,
and boundary density. We also discuss a quadratic extension and give
an independent spectral estimate.
\end{abstract}
\maketitle

\section{Introduction}

Write $[m] = \{1, \dots, m\}$, let $\enorm{\cdot}$ denote the Euclidean norm,
and let $\ball{d}$ be the Euclidean unit ball in $\R^d$.
Our main result is the following.

\begin{thm}\label{thm:halfspaces}
Let $H_1, \dots, H_{2d}$ be closed halfspaces containing $\ball{d}$, and put
$P = \bigcap_{i \in [2d]} H_i$. Then $P$ contains a point $x$ with
\[
 \enorm{x} \geq \sqrt{d}.
\]
Moreover, if $\max_{x \in P} \enorm{x} = \sqrt{d}$, then $P$ is an orthogonal
image of $[-1, 1]^d$.
\end{thm}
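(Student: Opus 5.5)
The plan is to bound the maximum of $\enorm{x}^2$ over $P$ from below by a weighted average of $\enorm{v}^2$ over the vertices $v$ of $P$, with Gaussian weights, and to prove that this average is at least $d$.

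First the reductions. Write $H_i=\{x:\iprod{u_i}{x}\le h_i\}$ with $\enorm{u_i}=1$. Since $\ball{d}\subset H_i$ we have $h_i\ge 1$. Replacing $h_i$ by $1$ only shrinks $P$, so we may assume $h_i=1$ for all $i$. If $P$ is unbounded, the claim is trivial, so assume $P$ is a polytope. Then the $u_i$ positively span $\R^d$. A small perturbation of the $u_i$, together with continuity of $\max_P\enorm{x}$, lets us assume $P$ is simple.

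For each vertex $v$, let $S_v\subset[2d]$ be its $d$ active indices. Its normal cone is $N_v=\mathrm{cone}\{u_i:i\in S_v\}$, and these cones tile $\R^d$. Let $\gamma$ be the standard Gaussian measure and $g\sim\gamma$. Then
\[
\max_{x\in P}\enorm{x}^2\;\ge\;\sum_v \gamma(N_v)\,\enorm{v}^2=\mathbb{E}\,\enorm{\nabla h_P(g)}^2 ,
\]
where $h_P$ is the support function. It remains to show that the right-hand side is at least $d$.

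For a vertex, let $A_v$ be the matrix with rows $u_i$, $i\in S_v$, and let $G_v=A_vA_v^{T}$ be the Gram matrix. Then $v=A_v^{-1}\one$, so $\enorm{v}^2=\one^{T}G_v^{-1}\one$. The weight $\gamma(N_v)$ is a Gaussian orthant probability determined by $G_v$: it is the probability that a centered Gaussian vector with covariance $G_v^{-1}$ lies in the positive orthant.

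The core step, which I expect to be the main obstacle, is a local inequality of the form
\[
\gamma(N_v)\,\one^{T}G_v^{-1}\one\;\ge\;\sum_{i\in S_v}\omega_i(N_v).
\]
Here $\omega_i(N_v)$ is a quantity attached to the facet of the cone $N_v$ opposite $u_i$, and also to the ray $u_i$. It should be built from the Gaussian mass of $N_v$, the Gaussian centroid of $N_v$ (using that $\mathbb{E}[g\,\one_{N_v}(g)]$ is a sum of facet-boundary densities), and the boundary density on the facets. The $\omega_i$ must be chosen so that, summing over the fan, contributions on shared facets telescope or combine. What should remain is a total of exactly $d$ when there are $2d$ rays; this is where the count $2d$ must enter, since each ray $u_i$ carries total weight about $1/2$.

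To prove the local inequality, I would slice along the direction of $v$ (or along one $u_i$). This reduces it to a one-dimensional comparison: a log-concave profile against the Gaussian, comparing mass, first moment and endpoint density, in the spirit of Mills-ratio type bounds. The abstract indicates that this is precisely the route that works.

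For the equality case, equality in the Gaussian average forces $\enorm{v}^2=d$ at every vertex. Equality in the one-dimensional comparison should force every $G_v$ to be the identity. Hence the $u_i$ come in $d$ orthonormal antipodal pairs, and $P$ is an orthogonal image of $[-1,1]^d$.
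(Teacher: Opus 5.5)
Your setup matches the paper's: tangent halfspaces, $\max_P\enorm{x}^2\ge\sum_v\gamma(N_v)\enorm{v}^2$, and $\gamma(N_v)=\alpha(G_v)$, $\enorm{v}^2=t(G_v)$. Your remark that the Gaussian first moment of $N_v$ splits into facet terms is also a real ingredient; it is the paper's identity $v_i=(G\mu)_i$.

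There is a genuine gap, however. The whole difficulty of the theorem sits in your ``core step'', which you neither formulate nor prove. You give no definite $\omega_i$, no identity evaluating $\sum\omega_i$, and no proof of the local bound, so as written no nontrivial lower bound is established.

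In the paper, the number of normals does not enter through cancellation across shared facets. Such cancellation gives identities like $\sum_v\int_{N_v}p\di\gamma_d(p)=0$, which do not involve $m$. It enters through an identity around each ray. Write $N_v=A_v^T\R_+^d$ and let $\beta_i=\gamma_d(A_v^TU_i)$ be the Gaussian measure of the cone with the $i$th positivity condition dropped. It equals the $(d-1)$-dimensional Gaussian measure of the projection onto $u_i^\perp$ of the facet of $N_v$ opposite $u_i$. Since all $u_j$ are unit vectors and $0\in\operatorname{int}\cof{u_j: j\in[2d]}$, these projected facets, taken over all cones containing $u_i$, tile $u_i^\perp$. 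Hence $\sum_v\sum_{i\in S_v}\beta_i=m$. The local inequality that closes the argument is
$\gamma(N_v)\enorm{v}^2\ge\sum_{i\in S_v}\bigl((1+\tfrac{\pi}{2})\gamma(N_v)-\tfrac{\pi}{4}\beta_i\bigr)$,
which is $\Psi(G_v)+\frac{4}{\pi}t(G_v)\ge(2+\frac{4}{\pi})d$ multiplied by $\frac{\pi}{4}\gamma(N_v)$. Summing gives $(1+\frac{\pi}{2})d-\frac{\pi}{4}m$, which equals $d$ for $m=2d$ and is larger for $m<2d$.

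Proving this local inequality is the real work, and ``slice and compare with a Mills-ratio bound'' does not determine it. The paper conditions on $x_j>0$ for $j\ne i$. The normalized $i$th marginal of $f_G$ on $U_i$ is then a log-concave mixture of translates of $\varphi$. Its mass on $(0,\infty)$ is $\alpha/\beta_i$, its centroid there is $\mu_i$ (the orthant centroid), and its normalized density at $0$ is $(G\mu)_i$. The one-dimensional estimate has to be linear in $1/A$, $u$ and $uv$, namely $1/A\ge 2+\frac{4}{\pi}-4\sqrt{2/\pi}\,u+2uv$. It comes from three facts:
\begin{enumerate}
\item $1/q\ge1+\frac{\pi}{2}\lambda^2$;
\item the tangent bound $\lambda(s)\ge\sqrt{2/\pi}+\frac{2}{\pi}s$;
\item $uv\le1$, via Pr\'ekopa.
\end{enumerate}
This linear form is what makes the argument work. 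After summing over $i$, the products $u_iv_i$ give $\iprod{G\mu}{\mu}$. Adding $\frac{4}{\pi}t(G)$ then completes the square $2\enorm{G^{1/2}\mu-\sqrt{2/\pi}\,G^{-1/2}\one}^2$, so $\mu$ is eliminated.

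Finally, perturbing to a simple $P$ is fine for the inequality but gives nothing for the equality case, where your vertex-wise $G_v$ is undefined at non-simple vertices. Instead, triangulate the boundary of the polar without adding vertices; this splits each normal cone into simplicial cones, and both identities persist. Then deduce $G_T=\id_d$ for every simplex, and use connectivity of the triangulation to get $d$ antipodal orthonormal pairs. You also need to show that the original hyperplanes were already tangent. The strict bound for $m<2d$ rules out repeated normals.
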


This resolves a problem proposed by C. Zong in 1993
\cite[Problem~4]{dalla2000blocking}. The three-dimensional case was solved
by L.~Fejes T\'oth; see \cite{fejestoth1972lagerungen}.
Dalla, Larman, Mani-Levitska, and Zong proved the four-dimensional case
in \cite[Theorem~6]{dalla2000blocking}.
By polarity, the question is equivalent to the optimal covering of
$\partial \ball{d}$ by $2d$ equal spherical caps; see also
\cite[Chapter~6]{boroczky2004finite}.
In particular, \Href{Theorem}{thm:halfspaces} gives
\[
 \min_{u \in \partial \ball{d}} \max_{i \in [2d]} \iprod{a_i}{u}
 \leq \frac{1}{\sqrt{d}}
 \qquad \text{for } a_1, \dots, a_{2d} \in \partial \ball{d}.
\]
Equality is attained on the standard cross-polytope.

The proof is based on estimating the average squared norm of the vertices
with weights given by the Gaussian measures of their normal cones.
Let $\gamma_d$ be the standard Gaussian probability measure on $\R^d$,
with density $(2 \pi)^{-d/2} e^{-\enorm{x}^2/2}$.
For a polytope $P$, we write $\vertices(P)$ for its set of vertices.

\begin{dfn}\label{dfn:gaussian-vertex-measure}
Let $P \subset \R^d$ be a bounded full-dimensional polytope.
For $v \in \vertices(P)$, its normal cone and Gaussian weight are
\[
 N_P(v) = \{p \in \R^d: \iprod{p}{x - v} \leq 0 \text{ for every } x \in P\},
 \qquad \nu_\gamma(v) = \gamma_d(N_P(v)).
\]
The \emph{Gaussian vertex measure} of $P$ assigns weight
$\nu_\gamma(v)$ to $v$. Its second moment operator is
\[
 A_\gamma(P) = \sum_{v \in \vertices(P)} \nu_\gamma(v)\, v \otimes v,
\]
where $(v \otimes v)x = \iprod{v}{x} v$.
\end{dfn}

The normal cones partition $\R^d$ up to their boundaries, so the weights
sum to one. Equivalently, identify a functional with a vector $p \in \R^d$
by the Euclidean inner product, choose $p$ according to $\gamma_d$, and
let $v_P(p)$ be the vertex maximizing $\iprod{p}{x}$ over $P$.
It is unique outside a set of Gaussian measure zero, and
\begin{equation} \label{eq:gaussian-moment}
 A_\gamma(P) = \int_{\R^d} v_P(p) \otimes v_P(p) \di \gamma_d(p).
\end{equation}
Thus, $\tr A_\gamma(P)$ is the average squared norm of this vertex.
We prove the following bound.

\begin{thm}\label{thm:gaussian-vertices}
Let $a_1, \dots, a_m \in \partial \ball{d}$ be distinct, and assume that
\[
 P = \{x \in \R^d: \iprod{a_i}{x} \leq 1, \ i \in [m]\}
\]
is bounded. Then
\begin{equation} \label{eq:average}
 \tr A_\gamma(P)
 = \sum_{v \in \vertices(P)} \nu_\gamma(v) \enorm{v}^2
 \geq d + \frac{\pi}{4}(2d - m).
\end{equation}
Equality holds if and only if $m = 2d$ and $P$ is an orthogonal image
of $[-1, 1]^d$.
\end{thm}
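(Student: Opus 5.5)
The plan is to localize \eqref{eq:average} to the individual normal cones and prove a sharp inequality for one Gaussian orthant-type cone. Each normal cone has the form $N_P(v) = \cof{0, a_i : i \in I(v)}$, the conic hull of the active normals $I(v) = \{i : \iprod{a_i}{v} = 1\}$. If $v$ is not simple, triangulate $N_P(v)$ into simplicial cones spanned by $d$ linearly independent active normals. The vertex $v$ is still determined by $\iprod{a_i}{v} = 1$ on each piece, so it suffices to treat simplicial cones $C = \{\sum_{i \le d} \lambda_i a_i : \lambda \ge 0\}$, with $v = v_C$ the solution of $\iprod{a_i}{v_C} = 1$ for $i \le d$. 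These cones tile $\R^d$ up to null sets.

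\textbf{Bookkeeping.} Let $\theta_i(C)$ be the normalized solid angle of $C$ at its edge ray $\R_+ a_i$. Concretely, this is the Gaussian measure of the projection of $C$ onto $a_i^\perp$, viewed as a cone in $\R^{d-1}$. The cones of the tiling containing the ray $\R_+ a_i$ fill a full neighbourhood of it, so
\[
 \sum_{C \ni a_i} \theta_i(C) = 1 \qquad \text{for each } i \in [m],
 \qquad\text{and}\qquad \sum_C \theta_C = m \ \text{ with } \theta_C := \sum_{j \le d} \theta_{j}(C).
\]
We also have $\sum_C \gamma_d(C) = 1$ and $\sum_C \int_C \enorm{p}^2 \di \gamma_d = d$.

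\textbf{Key local inequality.} For every simplicial cone $C$ spanned by unit vectors $a_1, \dots, a_d$, I aim to prove
\[
 \gamma_d(C)\,\enorm{v_C}^2 \;\ge\; \int_C \enorm{p}^2 \di\gamma_d(p) + \frac{\pi}{4}\Bigl(2d\,\gamma_d(C) - \theta_C\Bigr),
\]
with equality if and only if the $a_i$ are orthonormal. The orthant is a sanity check: there $\enorm{v}^2 = d$, $\gamma_d(C) = 2^{-d}$, $\int_C \enorm{p}^2\di\gamma_d = d\,2^{-d}$ and $\theta_C = d\,2^{1-d}$, so both sides agree. Summing the local inequality over the tiling gives $\tr A_\gamma(P) \ge d + \frac{\pi}{4}(2d - m)$ directly from the bookkeeping identities.

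\textbf{Proving the local inequality (main obstacle).} Write $v_C = \sum_j \mu_j a_j^*$, where $(a_j^*)$ is the dual basis with $\iprod{a_j^*}{a_i} = \delta_{ij}$. The face of $C$ opposite $a_j$ is $\{p \in C : \iprod{a_j^*}{p} = 0\}$. I would first try to reduce to one-dimensional statements along the directions $a_j^*$. The Gaussian divergence theorem applied to the constant field $v_C$ on $C$ gives
\[
 \iprod{v_C}{\textstyle\int_C p \di\gamma_d} = \sum_{\text{facets } F \text{ of } C} \iprod{v_C}{n_F}\, \gamma_{d-1}\text{-density of } F,
\]
which links the centroid of $C$, the boundary densities, and $v_C$. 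I then plan to combine the elementary bound $\enorm{v_C}^2 \gamma_d(C) \ge \iprod{v_C}{\int_C p\di\gamma_d}^2 / \int_C \iprod{p}{\hat v}^2 \di\gamma_d$-type estimates with a one-dimensional comparison of mass, centroid and boundary density in each direction $a_j^*$ against the half-line case. The factor $\pi/4$ arises as $(\sqrt{2/\pi})^{-2}/2$, from the ratio between the Gaussian density at $0$ and the mean of a half-Gaussian. The boundary density of the facet opposite $a_j$ is essentially $\theta_j$-related through $(2\pi)^{-1/2}$, which is how the $\theta_C$ term enters. This step is the real content, and I expect the delicate part to be showing that the slack from non-orthogonality is always nonnegative; a Cauchy–Schwarz slack in the one-dimensional comparison should vanish only when every facet is orthogonal to the opposite edge, which forces the $a_j$ to be orthonormal.

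\textbf{Equality.} If equality holds in \eqref{eq:average}, every cone of every triangulation is an orthant. Hence all vertices are simple with orthonormal active normals, so the normals are $\pm e_i$ in some orthonormal basis and $m = 2d$. This makes $P$ an orthogonal image of $[-1,1]^d$.
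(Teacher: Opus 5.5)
Your reduction is the paper's. Triangulating each normal cone by active normals is the same as triangulating $\partial P^\circ$ without new vertices. Your $\theta_i(C)$ is the paper's $\beta_i(G_T)=\gamma_{d-1}(D_{T,a})$, and your two bookkeeping identities are \eqref{eq:cone-partition} and \eqref{eq:projected-partition}. Moreover, for any cone the radial and angular parts of $\gamma_d$ are independent, so $\int_C\enorm{p}^2\di\gamma_d(p)=d\,\gamma_d(C)$. Your local inequality is therefore equivalent to $\Psi(G)+\frac{4}{\pi}t(G)\ge(2+\frac{4}{\pi})d$ for the Gram matrix $G$ of $a_1,\dots,a_d$, i.e.\ to \Href{Theorem}{thm:gaussian}.

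The gap is that this inequality, which is the entire analytic content of the result, is not proved. Your last step is a list of intentions, and the one concrete mechanism you give for how $\theta_C$ enters is wrong. The Gaussian boundary density of the facet $F_j=\operatorname{pos}\{a_i:i\neq j\}$ is $(2\pi)^{-1/2}\gamma_{d-1}(F_j)$, measured in the hyperplane of $F_j$. By contrast, $\theta_j$ is the Gaussian measure of the projection of $F_j$ onto $a_j^\perp$. These differ unless $a_j\perp F_j$, and Gaussian measures of cones do not change by a fixed factor under such non-isometric linear maps.

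Here is what is actually needed; none of it appears in your sketch. Restrict $\gamma_d$ to the cylinder $C+\R a_j$, normalize, and take the law of $x_j=\iprod{a_j^*}{p}$. This law has the following properties.
\begin{enumerate}
\item Because $\enorm{a_j}=1$, it is a log-concave mixture of \emph{standard} Gaussian translates. This is where unit normals are essential.
\item Its mass on $(0,\infty)$ is $\gamma_d(C)/\theta_j$. So $\theta_j$ enters as a reciprocal mass, not as a boundary density.
\item Its centroid on $(0,\infty)$ is $\iprod{a_j^*}{m_C}$, where $m_C$ is the Gaussian centroid of $C$.
\item Its normalized density at $0$ is $\iprod{a_j}{m_C}$. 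This follows from the divergence theorem applied to the field $a_j$ (not $v_C$), which sees only the facet $F_j$.
\end{enumerate}
One then needs the sharp scalar inequality of \Href{Lemma}{lem:scalar}. Its proof uses the log-convexity of the Mills ratio, the tangent-line bound at $0$ for the convex Gaussian hazard rate, and $uv\le 1$ from Pr\'ekopa's theorem. Sum over $j$, using $v_C=\sum_j a_j^*$ and $\sum_j\iprod{a_j^*}{m_C}\iprod{a_j}{m_C}=\enorm{m_C}^2$. This gives
\[
 \frac{\theta_C}{\gamma_d(C)}+\frac{4}{\pi}\enorm{v_C}^2-\Bigl(2+\frac{4}{\pi}\Bigr)d\ \geq\ 2\enorm{m_C-\sqrt{2/\pi}\,v_C}^2\ \geq\ 0 .
\]
So $\enorm{v_C}^2$ is not bounded below by a Cauchy--Schwarz estimate against the centroid; it completes a square.

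Finally, in the equality case, knowing that every cone is an orthant does not by itself give a single orthonormal basis. You also need that across a common facet the two missing unit vectors are opposite, together with connectivity of the adjacency graph of the cones.
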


The proof uses direct calculations with Gaussian measures. The main
technical inequality is stated as \Href{Theorem}{thm:gaussian} below.
Its geometric application follows the angle-sum approach proposed in
\cite[Remark~2]{dalla2000blocking}.
We implement this approach using the Gaussian cone weights from our
work on projections of the cross-polytope
\cite{ivanov2026maximalvolumeprojectionscrosspolytope}.
Here, an additional identity comes from orthogonal projection and
makes essential use of the local structure of the polar polytope
at each vertex. We previously used a similar idea of examining the local
structure at a vertex to study projections of the cross-polytope
in \cite{ivanov2021volume}.
After triangulating the boundary of the polar polytope without adding
vertices, at each vertex $a_i$ we project the cones over the incident
simplices onto $a_i^\perp$, the linear hyperplane
parallel to the supporting hyperplane tangent to the unit sphere at $a_i$.
The projected cones partition $a_i^\perp$, so their standard Gaussian
measures in this hyperplane sum to one.
Summing over all vertices of the polar gives $m.$
This identity complements the fact that the Gaussian measures of the
original cones sum to one and brings the number of halfspaces into
the estimate.

For background on the Gaussian
calculations, see \cite[Section~2.5]{anderson2003introduction} and
\cite[Section~3.3.2 and Lemmas~7.2.3, 7.2.5]{vershynin2018high}.
Moments of Gaussian distributions restricted to an orthant were studied
by Tallis~\cite{tallis1961moment}.

We denote by $\one \in \R^d$ the vector with all coordinates equal to one,
and by $\id_d$ the identity operator on $\R^d$.
For matrices, $\iprod{A}{B} = \tr(A^TB)$ is the Hilbert--Schmidt inner
product on $\R^{d \times d}$, with norm $\norm{\cdot}_{\HS}$.

In \Href{Section}{sec:halfspace-proof}, we deduce
\Href{Theorem}{thm:halfspaces} from \Href{Theorem}{thm:gaussian-vertices}
and prove the latter assuming the Gaussian inequality.
\Href{Section}{sec:scalar} contains the one-dimensional comparison,
and \Href{Section}{sec:gaussian-proof} proves the Gaussian inequality.
\Href{Section}{sec:quadratic} discusses a quadratic extension and open
questions suggested by the proof.
Finally, in \Href{Section}{sec:spectral}, we present the approach based on
the work of Ball and Prodromou~\cite{Ball2009} that led us to the solution
of the original problem. This approach gives only the bound
$0.98 \sqrt{d}$, but its treatment of nonsymmetric constructions may be
of independent interest. Related questions of sparse approximation
arise in the work of Almendra-Hern\'andez, Ambrus, and
Kendall~\cite{almendra2022quantitative}.

\section{Proof of the halfspace theorem}\label{sec:halfspace-proof}

\subsection{Deduction of \Href{Theorem}{thm:halfspaces}}

\begin{proof}[Proof of \Href{Theorem}{thm:halfspaces}]
If $P$ is unbounded, the conclusion is immediate. Otherwise, move each
bounding hyperplane inward until it is tangent to $\ball{d}$. The
resulting polytope $P_0 \subseteq P$ is bounded and has $m \leq 2d$
distinct facet normals. \Href{Theorem}{thm:gaussian-vertices} gives
\[
 \max_{x \in P} \enorm{x}^2 \geq \tr A_\gamma(P_0)
 \geq d + \frac{\pi}{4}(2d - m) \geq d.
\]
The first inequality holds because every vertex of $P_0$ belongs to $P$
and the Gaussian weights sum to one.

If $\max_{x \in P} \enorm{x} = \sqrt{d}$, all inequalities are equalities. Thus, $m = 2d$
and $P_0$ is a rotated cube. Its normals are the pairs $\pm e_i$ in
some orthonormal basis. As there were only $2d$ original halfspaces,
each of these normals occurs exactly once. Moving any one of their
hyperplanes outward increases the norm of a corner. Therefore all the
original hyperplanes were already tangent, and $P = P_0$.
The converse follows by inspecting the vertices of the cube.
\end{proof}

\subsection{The Gaussian inequality}\label{subsec:gaussian-statement}

Let $G$ be a symmetric positive definite $d \times d$ matrix with $G_{ii} = 1$ for
$i \in [d]$. We use the density
\begin{equation} \label{eq:gaussian-density}
 f_G(x) = \frac{\sqrt{\det G}}{(2 \pi)^{d/2}}
  \exp \parenth{-\frac{1}{2} \iprod{Gx}{x}}.
\end{equation}
The substitution $w = G^{1/2} x$ turns its integral into the integral of
the standard Gaussian density. In particular, its total mass is one.
Its covariance matrix is $G^{-1}$: the matrix in the exponent is the
inverse covariance matrix.

Write $\R_+^d = (0, \infty)^d$ for the positive orthant, and put
\[
 U_i = \{x \in \R^d: x_j > 0 \text{ for } j \in [d] \setminus \{i\}\}.
\]
Thus, in $U_i$ we retain all positivity conditions except the one on
coordinate $i$. Define
\begin{equation} \label{eq:angular}
 \begin{split}
 \alpha(G) & = \int_{\R_+^d} f_G(x) \di x, \qquad
 \beta_i(G) = \int_{U_i} f_G(x) \di x, \\
 \Psi(G) & = \frac{\sum_{i \in [d]} \beta_i(G)}{\alpha(G)}, \qquad
 t(G) = \iprod{G^{-1} \one}{\one}.
 \end{split}
\end{equation}
The ratio $\beta_i/\alpha$ records how much the mass increases when we
drop condition $x_i > 0$. The second quantity is
$t(G) = \enorm{G^{-1/2} \one}^2$. For $d = 1$, we use $U_1 = \R$ and
$\beta_1(G) = 1$.

\begin{samepage}
\begin{thm}\label{thm:gaussian}
For every symmetric positive definite $G \in \R^{d \times d}$ with $G_{ii} = 1$ for
$i \in [d]$, we have
\begin{equation} \label{eq:gaussian}
 \Psi(G) + \frac{4}{\pi} t(G) \geq \parenth{2 + \frac{4}{\pi}} d.
\end{equation}
Equality holds if and only if $G = \id_d$.
\end{thm}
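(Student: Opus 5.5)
The plan is to reduce \eqref{eq:gaussian} to a one-dimensional inequality for a Gaussian $N(\mu,1)$ restricted to a half-line, by conditioning on all coordinates but one. Fix $i$ and write $x = (x_i, y)$ with $y \in \R^{d-1}$. Since $G_{ii} = 1$, the conditional law of $x_i$ given $y$ has density $\phi(x_i - \mu_i(y))$, where $\phi$ is the standard normal density and $\mu_i(y) = -\sum_{j \neq i} G_{ij} y_j$. Let $g_i(y)$ denote the marginal density of $y$, and let $\Phi$ be the standard normal distribution function. Then
\[
 \beta_i = \int_{y>0} g_i, \qquad
 \alpha = \int_{y>0} g_i\, \Phi(\mu_i), \qquad
 b_i := \int_{y>0} g_i\, \phi(\mu_i).
\]
Here $b_i$ is the boundary density of the orthant on the face $\{x_i = 0\}$. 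In addition, $\int_{y>0} g_i\,(\mu_i\Phi(\mu_i) + \phi(\mu_i))$ is $\alpha$ times the $i$-th coordinate of the centroid $c$ of $f_G$ restricted to $\R_+^d$. These three quantities are the "mass, centroid, boundary density" of each fibre.

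Next I would tie $t(G)$ to these quantities by integration by parts on $\R_+^d$, using $\partial_i f_G = -(Gx)_i f_G$. This gives two identities:
\[
 \alpha\,(Gc)_i = b_i, \qquad \int_{\R_+^d} x_i (Gx)_i f_G = \alpha .
\]
Summing the second identity over $i$ gives $\tr(GM) = d$, where $M$ is the normalized second moment of the restricted measure. I would then bound $t = \iprod{G^{-1}\one}{\one}$ from below by Cauchy--Schwarz in the $G$-inner product, tested against a combination $\lambda c + (\text{correction})$. The plain choice gives
\[
 t \geq \frac{\iprod{Gc}{\one}^2}{\iprod{Gc}{c}} \geq \frac{(\sum_i b_i/\alpha)^2}{d},
\]
using $\iprod{Gc}{c} \le \tr(GM) = d$. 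I would keep $\iprod{Gc}{c}$, rather than $d$, in the denominator as long as possible, since it carries the centroid information. Finally, I would linearize the quadratic expression at the equality point $G = \id_d$ by a tangent-line bound $s^2 \geq 2 s_0 s - s_0^2$, where $b_i/\alpha = \sqrt{2/\pi}$ at that point. This turns the target into a sum over $i$ of integrals $\int_{y>0} g_i\, h(\mu_i(y))\,\di y \geq 0$, with $h(\mu)$ a linear combination of $1$, $\Phi(\mu)$, $\phi(\mu)$ and $\mu\Phi(\mu)+\phi(\mu)$.

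The main obstacle is choosing the coefficients so that the scalar function $h$ is nonnegative on all of $\R$, with equality only at $\mu = 0$. A quick check shows that using only mass and boundary density (dropping the centroid) fails already at $\mu = 0$: the constant comes out as $4/\pi^2$ against the needed $2/\pi$. So the centroid term must enter, which is why the denominator $\iprod{Gc}{c}$ should be handled more carefully than via $\tr(GM) = d$. I would first fix the coefficients by requiring $h(0) = 0$ and $h'(0) = 0$. Then I would verify $h \geq 0$ by a one-variable analysis, using the Mills-ratio bounds on $\phi/\Phi$ at both tails and convexity of $\mu \mapsto \mu\Phi(\mu)+\phi(\mu)$ near $0$.

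For the equality case, $h$ vanishes only at $\mu = 0$. Equality therefore forces $\mu_i(y) = 0$ for almost every $y > 0$, hence all $G_{ij} = 0$ for $j \neq i$, i.e.\ $G = \id_d$. Finally, I would check the case $d = 1$ directly: there $\Psi = 2$ and $t = 1$.
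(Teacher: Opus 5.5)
\paragraph{Verdict.} The plan has a genuine gap. The pointwise reduction to $\int_{y>0} g_i\,h(\mu_i(y))\di y\ge0$ cannot handle the quadratic centroid term, and log-concavity of the fibre density, which the paper needs, never enters your argument.

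\paragraph{The Cauchy--Schwarz step.} Your fibre bookkeeping matches the paper's: $\alpha$, $\alpha c_i$ and $b_i=\alpha(Gc)_i$ are integrals of $\Phi$, $\mu\Phi+\phi$ and $\phi$ against $g_i$, with $c$ the orthant centroid. The Cauchy--Schwarz step, however, is misstated. In the $G$-inner product it gives $t\geq\iprod{c}{\one}^2/\iprod{Gc}{c}$, with the centroid in the numerator. Your version with $\iprod{Gc}{\one}$ is false. For $d=2$ and $G_{12}=\rho>0$ the centroid is a multiple of $\one$, so your right-hand side is $2(1+\rho)>2/(1+\rho)=t$. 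The consequence $t\geq(\sum_i b_i/\alpha)^2/d$ also fails: as $\rho\to1$, $t\to1$ while the right-hand side tends to $\pi$. The correct bound, in the form $t\geq2\lambda\iprod{c}{\one}-\lambda^2\iprod{Gc}{c}$ with $\lambda=\sqrt{\pi/2}$, is exactly the paper's completed square $\frac{4}{\pi}t\geq4\sqrt{2/\pi}\,\iprod{c}{\one}-2\iprod{Gc}{c}$. It reduces the theorem to
\[
 \frac{\beta_i}{\alpha}\geq2+\frac{4}{\pi}-4\sqrt{\frac{2}{\pi}}\,c_i+2c_i(Gc)_i,
 \qquad i\in[d].
\]

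\paragraph{The gap.} This inequality cannot be written as $\int_{y>0}g_i\,h(\mu_i(y))\di y\geq0$.
\begin{itemize}
\item Multiplied by $\alpha$, the last term is $2(\alpha c_i)\,b_i/\alpha$: a product of two fibre integrals divided by a third. It is not linear in the law $\sigma$ of the shift. A tangent-line bound cannot remove it, because you need an upper bound for it.
\item Your only linear control, $\iprod{Gc}{c}\leq\tr(GM)=d$, is not sharp at $G=\id_d$, where $\iprod{Gc}{c}=2d/\pi$. Using it leaves $2d+8d/\pi^2<(2+4/\pi)d$ at the identity.
\item The coordinate inequality is false for general mixtures. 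For $p(s)=\int\phi(s-\mu)\di\sigma(\mu)$ with half-line mass $A$, centroid $u$ and normalized boundary value $v$, it reads $1/A\geq2+4/\pi-4\sqrt{2/\pi}\,u+2uv$. Take atoms at $\mu=L$ and $\mu=-4$, weighted to contribute equal half-line mass. As $L\to\infty$, $1/A$ stays bounded, $v>2$, and $u\approx L/2$, so the right-hand side grows linearly in $L$.
\end{itemize}
A pointwise bound $h\geq0$ yields an inequality valid for every $\sigma$, so no choice of $h$ can prove this. The one-variable check you single out as the main obstacle is not where the difficulty lies.

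\paragraph{What the paper uses instead.} The fibre density is log-concave, being a marginal of $f_G$ over the convex set $\{y>0\}$ (Pr\'ekopa). With $r=1/A=\beta_i/\alpha$, $u=c_i$ and $v=(Gc)_i$, the paper proves three bounds:
\begin{itemize}
\item $r\geq1+\frac{\pi}{2}v^2$, from log-convexity of the Mills ratio averaged over the reweighted shift law, followed by Cauchy--Schwarz;
\item $u\geq\sqrt{\pi/2}-(\pi/2-1)v$, from the tangent line at $0$ of the convex Gaussian hazard rate;
\item $uv\leq1$, because the normalized tail is log-concave and hence lies below $e^{-vx}$.
\end{itemize}
The first two settle $v\leq2\sqrt{2/\pi}$. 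The third settles larger $v$, where the coefficient of $u$ is negative.
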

\end{samepage}

For $G = \id_d$, the density is a product of $d$ standard Gaussian densities.
Then $\alpha(G) = 2^{-d}$, $\beta_i(G) = 2^{-(d - 1)}$,
$\Psi(G) = 2d$, and $t(G) = d$.
These values explain the equality in the theorem.

\subsection{Proof of the Gaussian vertex bound}

\begin{proof}[Proof of \Href{Theorem}{thm:gaussian-vertices}]
The case $d = 1$ is immediate: boundedness forces $m = 2$ and $P = [-1, 1]$.
Assume $d \geq 2$. The polar body of $P$ is
\[
 P^\circ = \{y \in \R^d: \iprod{y}{x} \leq 1 \text{ for every } x \in P\}.
\]
Put $K = P^\circ = \cof{a_i: i \in [m]}$. Its vertices are precisely
$a_1, \dots, a_m$. Triangulate the boundary of $K$ without adding
vertices, and let $\mathcal T$ be the set of simplices
of dimension $d - 1$ in this triangulation.
For $T \in \mathcal T$, let $B_T$ have the vertices of $T$ as columns,
and put
\[
 G_T = B_T^TB_T, \qquad v_T = B_T^{-T} \one, \qquad
 t_T = \enorm{v_T}^2 = t(G_T).
\]
Here $B^{-T}$ denotes the transpose of the inverse. The columns are
linearly independent because $T$ lies in a supporting hyperplane of
$K$ that does not contain the origin. The point $v_T$ is the vertex
of $P$ dual to the facet containing $T$, and $G_T$ has diagonal entries
equal to one.

The substitution $p = B_Tx$ gives the density
\[
 \frac{\abs{\det B_T}}{(2 \pi)^{d/2}} e^{-\enorm{B_Tx}^2/2} = f_{G_T}(x).
\]
Thus, $\gamma_d(B_T \R_+^d) = \alpha(G_T)$.
These cones partition $\R^d$ up to their boundaries. Those belonging
to the same facet of $K$ partition the normal cone of the dual vertex
of $P$. By \eqref{eq:gaussian-moment},
\begin{equation} \label{eq:cone-partition}
 \sum_{T \in \mathcal T} \alpha(G_T) = 1, \qquad
 \tr A_\gamma(P) = \sum_{T \in \mathcal T} \alpha(G_T)t_T.
\end{equation}

Fix a vertex $a$ of $K$, and let $\pi_a$ be the orthogonal projection
onto $a^\perp$. For each simplex $T \in \mathcal T$ containing $a$, define
\[
 D_{T, a} = \operatorname{pos} \{\pi_a(b): b \in \vertices(T) \setminus \{a\}\},
 \qquad \pi_a(b) = b - \iprod{b}{a} a.
\]
Here $\operatorname{pos}$ denotes the cone of nonnegative linear
combinations. 
The cones $D_{T, a}$ partition $a^\perp$ up to their boundaries.
Indeed, let $C_a = \operatorname{pos}(K - a)$.
Since $0 \in \operatorname{int} K$, we have
$-a \in \operatorname{int} C_a$.
Moreover, since the vertices of $K$ lie on the unit sphere,
$\iprod{a}{z} < 0$ for every nonzero $z \in C_a$.
Thus, every line parallel to $a$ meets $\partial C_a$
in exactly one point. Projecting the subdivision of
$\partial C_a$ induced by the simplices incident with $a$
gives the asserted partition, since $\pi_a(b - a) = \pi_a(b)$.

Suppose that $a$ is column $i$ of $B_T$. Recall that
$U_i = \{x \in \R^d: x_j > 0 \text{ for } j \in [d] \setminus \{i\}\}$.
Dropping the positivity condition on coordinate $i$ gives
\[
 B_TU_i = D_{T, a} + \R a
\]
up to the boundary of this cylinder.
The standard Gaussian factors into its component
on $a^\perp$ and its component on $\R a$. The latter has total mass
one. Hence,
\[
 \beta_i(G_T) = \gamma_d(B_TU_i) = \gamma_{d - 1}(D_{T, a}),
\]
where $\gamma_{d - 1}$ is the standard Gaussian measure on $a^\perp$.
The projected angles sum to one around each vertex. Summing over
the $m$ vertices of $K$ gives the second angle identity:
\begin{equation} \label{eq:projected-partition}
 \sum_{T \in \mathcal T} \alpha(G_T) \Psi(G_T) = m.
\end{equation}

Multiply \eqref{eq:gaussian} by $\alpha(G_T)$ and sum over
$T \in \mathcal T$. By \eqref{eq:cone-partition} and
\eqref{eq:projected-partition},
\[
 m + \frac{4}{\pi} \tr A_\gamma(P)
 \geq \parenth{2 + \frac{4}{\pi}} d.
\]
This is \eqref{eq:average}.

If equality holds, every use of \eqref{eq:gaussian} is an equality,
since every $\alpha(G_T)$ is positive. Thus, $G_T = \id_d$ for all
$T \in \mathcal T$. Every boundary simplex has an orthonormal vertex
set. Across a common face, the two missing unit vectors are distinct
and orthogonal to the same subspace of dimension $d - 1$, so they are
opposite. The adjacency graph of the boundary triangulation is connected.
Starting with one simplex, we conclude that all vertices of $K$ belong
to $\{\pm e_i: i \in [d]\}$ after a rotation. Since
$0 \in \operatorname{int} K$, all these vertices occur. Hence, $m = 2d$,
$K$ is a regular cross-polytope, and $P$ is a cube.
Conversely, all vertices of $[-1, 1]^d$ have squared norm $d$, giving
equality in \eqref{eq:average}.
\end{proof}

\section{A scalar Gaussian comparison}\label{sec:scalar}

To prove \Href{Theorem}{thm:gaussian}, we first establish a
one-dimensional comparison for log-concave mixtures of translates of
the standard Gaussian density.
It relates the mass on the positive half-line, the centroid of the
normalized restriction to that half-line, and its density at the boundary.
More precisely, the estimate in \Href{Lemma}{lem:scalar} is linear in
the reciprocal mass, the centroid, and the product of the centroid with
the normalized boundary density.
This form will yield a quadratic expression in the orthant centroid
after summation over the coordinates.
The proof uses the convexity of the Gaussian hazard rate, the
log-convexity of the Mills ratio, and the log-concavity of the mixture.
The constants come from the value and tangent slope of the hazard rate
at zero. In particular, the coefficient $4/\pi$ in
\eqref{eq:gaussian} is twice this slope.

Write
\[
 \varphi(s) = \frac{1}{\sqrt{2 \pi}} e^{-s^2/2}, \qquad
 q(s) = \int_s^\infty \varphi(t) \di t, \qquad
 \lambda(s) = \frac{\varphi(s)}{q(s)}.
\]
The function $\lambda$ is the \emph{inverse Mills ratio}, also called
the \emph{Gaussian hazard rate}; see \cite{baricz2008mills,sampford1953some}.
Integration of $\varphi'(t) = -t \varphi(t)$
shows that
\begin{equation} \label{eq:tail-mean}
 \frac{1}{q(s)} \int_s^\infty t \varphi(t) \di t = \lambda(s).
\end{equation}
We use two classical inequalities:
\begin{equation} \label{eq:mills}
 \frac{1}{q(s)} \geq 1 + \frac{\pi}{2} \lambda(s)^2,
 \qquad
 \lambda(s) \geq \sqrt{\frac{2}{\pi}} + \frac{2}{\pi} s
 \qquad \text{for } s \in \R.
\end{equation}
For the first, strict log-convexity of the Mills ratio $q/\varphi$ gives
\[
 \frac{q(s)q(-s)}{\varphi(s)^2}
 \geq \parenth{\frac{q(0)}{\varphi(0)}}^2 = \frac{\pi}{2}.
\]
Using $q(-s) = 1 - q(s)$, this gives the first inequality in \eqref{eq:mills}.
Equality holds only for $s = 0$; see \cite[Theorem~2.5(a)]{baricz2008mills}.
The second inequality is the tangent inequality at zero for the convex
function $\lambda$, since $\lambda(0) = \sqrt{2/\pi}$ and
$\lambda'(0) = 2/\pi$; see \cite{sampford1953some}.

\begin{lem}\label{lem:scalar}
Let $\sigma$ be a probability measure on $\R$, and suppose that
\[
 p(x) = \int_\R \varphi(x + s) \di \sigma(s)
\]
is log-concave. Define
\[
 A = \int_0^\infty p(x) \di x, \qquad
 u = \frac{1}{A} \int_0^\infty xp(x) \di x, \qquad v = \frac{p(0)}{A}.
\]
Then
\begin{equation} \label{eq:scalar}
 \frac{1}{A} \geq 2 + \frac{4}{\pi} - 4 \sqrt{\frac{2}{\pi}}\, u + 2uv.
\end{equation}
Equality holds if and only if $\sigma$ is concentrated at zero.
\end{lem}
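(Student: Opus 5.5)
The plan is to express all three quantities through the Gaussian tail functions of the shifts $s$. Then \eqref{eq:scalar} becomes a statement about expectations against a single probability measure. The mixture enters only through the product $uv$, which is where log-concavity is used.

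\textbf{Step 1 (rewriting).} Since $\int_0^\infty \varphi(x+s)\di x = q(s)$, we get $A = \int q(s)\di\sigma(s)$. Define the probability measure $\di w(s) = q(s)\di\sigma(s)/A$ and write $\mathbb E$ for expectation under $w$. Then:
\begin{itemize}
\item $\mathbb E[1/q(s)] = 1/A$ exactly.
\item By \eqref{eq:tail-mean}, $\int_0^\infty x\varphi(x+s)\di x = \int_s^\infty (t-s)\varphi(t)\di t = q(s)(\lambda(s)-s)$. Hence $u = \mathbb E[\lambda(s)-s]$.
\item Since $\varphi(s) = q(s)\lambda(s)$, we have $v = \mathbb E[\lambda(s)]$.
\end{itemize}
By the first inequality in \eqref{eq:mills} and Jensen,
\[
 \frac1A = \mathbb E\!\left[\frac1{q(s)}\right] \ge 1 + \frac{\pi}{2}\,\mathbb E[\lambda(s)^2] \ge 1 + \frac{\pi}{2} v^2 .
\]
Put $c = \sqrt{2/\pi}$. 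The tangent inequality in \eqref{eq:mills} gives $s \le \frac{\pi}{2}(\lambda(s)-c)$ pointwise. Averaging,
\[
 u \ge u_0(v) := \Bigl(1-\frac{\pi}{2}\Bigr)v + \frac{\pi}{2}c .
\]
It therefore suffices to show
\[
 1+\frac{\pi}{2}v^2 \ge 2+\frac4\pi + (2v-4c)\,u
\]
for the admissible values of $(u,v)$.

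\textbf{Step 2 (case $v \le 2c$).} Here the right-hand side is nonincreasing in $u$, so we may replace $u$ by $u_0(v)$. The claim reduces to nonnegativity of the quadratic
\[
 Q(v) = 1+\frac{\pi}{2}v^2 - 2 - \frac4\pi - (2v-4c)\,u_0(v).
\]
Its leading coefficient is $\frac{\pi}{2} - 2\bigl(1-\frac{\pi}{2}\bigr) = \frac{3\pi}{2}-2 > 0$. Also $Q(c) = 0$, using $u_0(c) = c$ and $c^2 = 2/\pi$. I expect $Q'(c) = 0$ as well, so that $Q(v) = \bigl(\frac{3\pi}{2}-2\bigr)(v-c)^2 \ge 0$. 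This is a routine check of the constants; the matching of constants is exactly why $4/\pi$ appears.

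\textbf{Step 3 (case $v > 2c$).} Now the right-hand side increases with $u$, and we need an upper bound on $u$. This is where log-concavity of $p$ enters. For a log-concave density on $[0,\infty)$, the normalized restriction $p/A$ has mean $u$ and value $v$ at $0$, and satisfies $uv \le 1$, with equality for the exponential density. I would prove this by comparing $p/A$ with the exponential density having the same value at $0$. Log-concavity forces at most one sign change of the difference, and equal mass then implies the mean is at most $1/v$. With $u \le 1/v$ the right-hand side is at most $4 + \frac4\pi - \frac{4c}{v}$. So it suffices that
\[
 1+\frac{\pi}{2}v^2 \ge 4+\frac4\pi-\frac{4c}{v} \qquad \text{for } v \ge 2c .
\]
At $v = 2c$ the two sides are $5$ and about $3.27$. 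The left side grows faster than the right on $v \ge 2c$, so the inequality holds throughout this range.

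\textbf{Equality.} Equality forces equality in Step 2, hence $v = c$. It also forces equality in the Mills ratio bound $w$-almost surely, which happens only at $s = 0$. So $\sigma = \delta_0$, and conversely $\sigma = \delta_0$ gives equality by direct evaluation.

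The main obstacle I anticipate is the bookkeeping of the product $uv$. Jensen handles the linear terms, but $uv$ is a product of two expectations, which is why the argument splits on the sign of $2v-4c$. The delicate part is the boundary case $v \approx 2c$: there I must make sure the lower bound $u \ge u_0(v)$ and the log-concave bound $uv \le 1$ together cover all admissible $(u,v)$ without a gap.
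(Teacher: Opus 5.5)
Your proposal is correct and follows the paper's proof essentially step for step. It uses the same reweighted measure $q\,\di\sigma/A$, the same two Mills-ratio inequalities, the same split at $v = 2\sqrt{2/\pi}$, and the same identity $Q(v) = \bigl(\frac{3\pi}{2}-2\bigr)\bigl(v-\sqrt{2/\pi}\bigr)^2$; your expected $Q'(\sqrt{2/\pi})=0$ does hold.

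The differences are minor. You prove $uv\le 1$ by a single-crossing comparison of $p/A$ with $v e^{-vx}$, whereas the paper applies Pr\'ekopa's theorem to the tail. Your argument is valid because $\ln(p(x)/A)+vx$ is concave and vanishes at $0$, so $p/A - ve^{-vx}$ is nonnegative and then negative; state this direction explicitly. In the case $v>2\sqrt{2/\pi}$ you use $u\le 1/v$ directly, where the paper uses the constant bound $u<\sqrt{\pi/8}$.
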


Here $A$ is the mass of $p$ on the positive half-line, $u$ is its
centroid, and $v$ is the endpoint value of the density $p/A$.

\begin{proof}
We first derive convenient integral representations for the three
quantities. For a single translate, substitution and \eqref{eq:tail-mean} give
\[
 \int_0^\infty \varphi(x + s) \di x = q(s), \qquad
 \int_0^\infty x \varphi(x + s) \di x = q(s) \parenth{\lambda(s) - s}.
\]
By Fubini's theorem, $A = \int_\R q(s) \di \sigma(s)$. Define a new
probability measure $\rho$ on $\R$ by
\[
 \di \rho(s) = \frac{q(s)}{A} \di \sigma(s).
\]
Its total mass is one by the formula for $A$. Put $r = 1/A$.
The definitions yield
\begin{equation} \label{eq:weighted-moments}
 r = \int_\R \frac{\di \rho(s)}{q(s)}, \quad
 v = \int_\R \lambda(s) \di \rho(s), \quad
 u = \int_\R \parenth{\lambda(s) - s} \di \rho(s).
\end{equation}
For example, the middle integral equals
$A^{-1} \int_\R \varphi(s) \di \sigma(s) = p(0)/A$.

Average the first inequality in \eqref{eq:mills} with respect to $\rho$.
By the Cauchy--Schwarz inequality and \eqref{eq:weighted-moments},
\[
 r \geq 1 + \frac{\pi}{2} \int_\R \lambda(s)^2 \di \rho(s)
 \geq 1 + \frac{\pi}{2} v^2.
\]
Next, average the second inequality in \eqref{eq:mills}. The last two
identities in \eqref{eq:weighted-moments} give
$\int_\R s \di \rho(s) = v - u$. Hence,
\[
 v \geq \sqrt{\frac{2}{\pi}} + \frac{2}{\pi} \int_\R s \di \rho(s)
 = \sqrt{\frac{2}{\pi}} + \frac{2}{\pi}(v - u).
\]

We claim that $uv \leq 1$. Indeed, write the tail as
\[
 S(x) = \frac{1}{A} \int_x^\infty p(t) \di t
 = \frac{1}{A} \int_0^\infty p(x + t) \di t.
\]
The integrand is log-concave in $(x, t)$, and the integration domain is
convex. Hence, $S$ is log-concave by Pr\'ekopa's theorem; see
\cite{prekopa1971logarithmic}.
Since $S(0) = 1$ and $(\ln S)'(0) = -v$, concavity gives $S(x) \leq e^{-vx}$.
Integrating this inequality and using Fubini's theorem, we obtain
$u = \int_0^\infty S(x) \di x \leq 1/v$.
We have obtained
\begin{equation} \label{eq:scalar-bounds}
 r \geq 1 + \frac{\pi}{2} v^2, \qquad
 u \geq \sqrt{\frac{\pi}{2}} - \parenth{\frac{\pi}{2} - 1} v,
 \qquad u \leq \frac{1}{v}.
\end{equation}

Write the difference between the two sides of \eqref{eq:scalar} as
\[
 \Delta = r - 2 - \frac{4}{\pi} + \parenth{4 \sqrt{\frac{2}{\pi}} - 2v} u.
\]
If $v \leq 2 \sqrt{2/\pi}$, use the lower bound for $u$ in
\eqref{eq:scalar-bounds}. Expanding gives
\[
 \begin{split}
 \Delta
 & \geq \frac{\pi}{2} v^2 - 1 - \frac{4}{\pi}
 + 2 \parenth{2 \sqrt{\frac{2}{\pi}} - v}
  \parenth{\sqrt{\frac{\pi}{2}} - \parenth{\frac{\pi}{2} - 1} v}\\
 & = \parenth{\frac{3 \pi}{2} - 2}
  \parenth{v - \sqrt{\frac{2}{\pi}}}^2 \geq 0.
 \end{split}
\]
If $v > 2 \sqrt{2/\pi}$, the coefficient of $u$ is negative. Using
$u \leq 1/v < \sqrt{\pi/8}$ instead gives
\[
 \Delta \geq \frac{\pi}{2} v^2 - \sqrt{\frac{\pi}{2}}\, v + 1 - \frac{4}{\pi}
 > 3 - \frac{4}{\pi} > 0.
\]
This proves \eqref{eq:scalar}.

Equality requires $v = \sqrt{2/\pi}$ and
$r = 1 + \frac{\pi}{2} v^2$. The first inequality in \eqref{eq:mills} is strict
for $s \neq 0$, so its average can be an equality only if $\rho$ is
concentrated at zero. Since $q(s) > 0$, the same is true of $\sigma$.
Conversely, for $p = \varphi$ we have $A = 1/2$ and
$u = v = \sqrt{2/\pi}$, which gives equality.
\end{proof}

\section{Proof of the Gaussian inequality}\label{sec:gaussian-proof}

We use the density $f_G$ and the quantities $\alpha(G)$, $\beta_i(G)$,
$\Psi(G)$, and $t(G)$ from \Href{Section}{subsec:gaussian-statement}.

The geometric idea is to compare the mass in the positive orthant with
the masses obtained by removing one of its defining inequalities.
For each coordinate, we restrict the Gaussian measure to $U_i$,
normalize it, and project it onto the $i$th coordinate axis.
The resulting density is a log-concave mixture of standard Gaussian
translates, to which \Href{Lemma}{lem:scalar} applies.
Its normalized restriction to the positive half-line has mean equal
to the $i$th coordinate of the orthant centroid.
Gaussian integration by parts identifies the density at zero of this
normalized restriction with the $i$th coordinate of $G$ applied to
this centroid.
Thus, the products of the centroid and boundary density in the scalar
inequality sum to the quadratic form defined by $G$, while the remaining
centroid terms sum to a linear functional.
Completing the square then gives the correction term $(4/\pi)t(G)$
in the Gaussian inequality.

\begin{proof}[Proof of \Href{Theorem}{thm:gaussian}]
The case $d = 1$ is immediate. Assume $d \geq 2$, and write
$\alpha = \alpha(G)$ and $\beta_i = \beta_i(G)$.
We first condition on all coordinates except one being positive.
Fix $i \in [d]$, write $t = x_i$, and let $y$ collect the other coordinates.
After placing coordinate $i$ first, define
\begin{equation} \label{eq:restricted-density}
 p_i(t) = \frac{1}{\beta_i} \int_{\R_+^{d - 1}} f_G(t, y) \di y.
\end{equation}
For each value of $t$, we add the densities over all positive $y$.
The factor $1/\beta_i$ makes the total integral equal to one:
\[
 \int_\R p_i(t) \di t
 = \frac{1}{\beta_i} \int_{U_i} f_G(x) \di x = 1.
\]
Thus, $p_i$ is the density of coordinate $i$ after restricting the
Gaussian measure to $U_i$ and normalizing its mass. 

In the same coordinate order, write
\[
 G = \begin{pmatrix} 1 & c^T\\c & D \end{pmatrix}.
\]
The diagonal assumption $G_{ii} = 1$ gives
\[
 \iprod{Gx}{x}
 = t^2 + 2t \iprod{c}{y} + \iprod{Dy}{y}
 = (t + \iprod{c}{y})^2 + \iprod{(D - c \otimes c)y}{y}.
\]
The matrix $D - c \otimes c$ is positive definite: its value on $y \neq 0$ equals
the value of $G$ on the nonzero vector $(-\iprod{c}{y}, y)$.
Block elimination also gives $\det G = \det(D - c \otimes c)$. It follows that
\begin{equation} \label{eq:density-factorization}
 \begin{split}
 f_G(t, y) & = \varphi(t + \iprod{c}{y})h_i(y), \\
 h_i(y) & = \frac{\sqrt{\det(D - c \otimes c)}}{(2 \pi)^{(d - 1)/2}}
  \exp \parenth{-\frac{1}{2} \iprod{(D - c \otimes c)y}{y}}.
 \end{split}
\end{equation}
For fixed $y$, the first factor integrates to one in $t$. Thus,
$h_i(y) = \int_\R f_G(t, y) \di t$ and
$\int_{\R_+^{d - 1}} h_i(y) \di y = \beta_i$. Substitution gives
\[
 p_i(t) = \int_{\R_+^{d - 1}} \varphi(t + \iprod{c}{y})
  \frac{h_i(y)}{\beta_i} \di y.
\]
This is an average of standard Gaussian translates: the shift is
$\iprod{c}{y}$ and the weights $h_i(y) \di y/\beta_i$ have total mass one.
Equivalently, the measure $\sigma_i$ in \Href{Lemma}{lem:scalar} assigns
to a set of shifts the weight of those $y$ for which $\iprod{c}{y}$
lies in that set. For the standard Gaussian density factorization,
see \cite[Section~2.5]{anderson2003introduction}.
The log-concavity of $p_i$ follows from Pr\'ekopa's theorem, since
\eqref{eq:restricted-density} integrates the log-concave density $f_G$
over the convex set of positive $y$; see
\cite{prekopa1971logarithmic,artstein2015asymptotic}.

Let
\begin{equation} \label{eq:orthant-centroid}
 \mu = \frac{1}{\alpha} \int_{\R_+^d} x f_G(x) \di x
\end{equation}
be the centroid of the Gaussian mass in the positive orthant.
For $p_i$, use the notation of \Href{Lemma}{lem:scalar}:
\[
 A_i = \int_0^\infty p_i(t) \di t, \qquad
 u_i = \frac{1}{A_i} \int_0^\infty tp_i(t) \di t,
 \qquad v_i = \frac{p_i(0)}{A_i}.
\]
Requiring $t > 0$ in \eqref{eq:restricted-density} restores the full
positive orthant. Therefore, Fubini's theorem gives
\begin{equation} \label{eq:coordinate-moments}
 \begin{split}
 A_i & = \frac{1}{\beta_i} \int_{\R_+^d} f_G(x) \di x
 = \frac{\alpha}{\beta_i}, \\
 u_i & = \frac{1}{A_i \beta_i} \int_{\R_+^d} x_i f_G(x) \di x
 = \mu_i.
 \end{split}
\end{equation}
In the second line, $A_i \beta_i = \alpha$, so we recover the $i$th
coordinate of the original centroid.

We use the standard notation $x_i$ for the $i$th coordinate of $x$
and $\partial_i = \partial/\partial x_i$ for the corresponding partial
derivative. Differentiation of \eqref{eq:gaussian-density} gives
\[
 \partial_i f_G(x) = -(Gx)_i f_G(x),
 \qquad (Gx)_i = \sum_{j \in [d]} G_{ij} x_j.
\]
For each fixed $y$, integrate in $t = x_i$ from zero to infinity:
\[
 \int_0^\infty (Gx)_i f_G(t, y) \di t
 = -\int_0^\infty \partial_i f_G(t, y) \di t = f_G(0, y).
\]
There is no boundary contribution at infinity because the Gaussian
density tends to zero there. Integrating over positive $y$, we obtain
\[
 \alpha(G \mu)_i
 = \int_{\R_+^d}(Gx)_i f_G(x) \di x
 = \int_{\R_+^{d - 1}} f_G(0, y) \di y
 = \beta_i p_i(0).
\]
Divide by $\alpha = A_i \beta_i$. The three scalar quantities are now
\begin{equation} \label{eq:scalar-identification}
 \frac{1}{A_i} = \frac{\beta_i}{\alpha},
 \qquad u_i = \mu_i,
 \qquad v_i = (G \mu)_i.
\end{equation}
In particular, $(G \mu)_i > 0$, even though some entries of $G$ may be
negative. This first-moment identity is a boundary form of Gaussian
integration by parts; compare \cite{tallis1961moment} and
\cite[Lemmas~7.2.3 and 7.2.5]{vershynin2018high}.

\Href{Lemma}{lem:scalar} and \eqref{eq:scalar-identification} give, for
each $i \in [d]$,
\[
 \frac{\beta_i}{\alpha}
 \geq 2 + \frac{4}{\pi} - 4 \sqrt{\frac{2}{\pi}}\, \mu_i
  + 2 \mu_i(G \mu)_i.
\]
Sum over $i \in [d]$, using
$\sum_{i \in [d]} \mu_i = \iprod{\mu}{\one}$ and
$\sum_{i \in [d]} \mu_i(G \mu)_i = \iprod{G \mu}{\mu}$. We obtain
\[
 \Psi(G) \geq \parenth{2 + \frac{4}{\pi}} d
  - 4 \sqrt{\frac{2}{\pi}} \iprod{\mu}{\one} + 2 \iprod{G \mu}{\mu}.
\]
The quadratic expression on the right has the following exact form:
\[
 2 \iprod{G \mu}{\mu} - 4 \sqrt{\frac{2}{\pi}} \iprod{\mu}{\one}
  + \frac{4}{\pi} \iprod{G^{-1} \one}{\one}
 = 2 \enorm{G^{1/2} \mu - \sqrt{\frac{2}{\pi}} G^{-1/2} \one}^2.
\]
Thus, the extra term $(4/\pi)t(G)$ completes the square, and
\begin{equation} \label{eq:square}
 \Psi(G) - \parenth{2 + \frac{4}{\pi}} d + \frac{4}{\pi} t(G)
 \geq 2 \enorm{G^{1/2} \mu - \sqrt{\frac{2}{\pi}} G^{-1/2} \one}^2
 \geq 0.
\end{equation}
This proves \eqref{eq:gaussian}.

Finally, suppose equality holds. Each scalar inequality has a
nonnegative difference between its two sides. Their sum, together
with the nonnegative square, can vanish only if every scalar
inequality is an equality. \Href{Lemma}{lem:scalar} then says that, for
each $i$, the shift $\iprod{c}{y}$ vanishes for almost every positive $y$
with respect to the weights $h_i(y) \di y/\beta_i$. This density is
positive throughout the open orthant. A nonzero linear function cannot
vanish almost everywhere on an open set, so $c = 0$. Hence, every
off-diagonal entry of $G$ is zero, and $G = \id_d$. The converse follows
from the computation for $G = \id_d$ in
\Href{Section}{subsec:gaussian-statement}.
\end{proof}

\section{A quadratic extension}\label{sec:quadratic}

The solution of the halfspace problem suggests several broader
formulations that we find interesting. They concern quadratic forms,
measures on vertices, and the vertex index. We first state the open
questions and then explain the relations between them.

\begin{dfn}\label{dfn:tight-frame}
A set of vectors $v_1, \dots, v_m \in \R^d$ is a \emph{tight frame} if
\[
 \sum_{i \in [m]} v_i \otimes v_i = \id_d.
\]
\end{dfn}

For such a frame, write
\[
 C = \{x \in \R^d: \iprod{v_i}{x} \leq 1, \ i \in [m]\}.
\]
This is the polar of $\cof{v_i: i \in [m]}$.

\subsection{Open problems}

The following questions remain open.

\begin{conj}\label{conj:energy}
Let $b_1, \dots, b_{2d} \in \R^d$ satisfy
$\ball{d} \subseteq \cof{b_i: i \in [2d]}$. Then
\begin{equation} \label{eq:energy}
 \sum_{i \in [2d]} \enorm{b_i}^2 \geq 2d^2.
\end{equation}
\end{conj}

\begin{conj}\label{conj:gaussian-functional}
Let $v_1, \dots, v_{2d} \in \R^d$ be a tight frame with
$0 \in \operatorname{int} \cof{v_i: i \in [2d]}$, and put
$C = \cof{v_i: i \in [2d]}^\circ$.
Then
\[
 \tr \sqrt{A_\gamma(C)} \geq \sqrt{2}\, d.
\]
\end{conj}

Both bounds are attained by the cross-polytope: take
$b_i \in \{\pm \sqrt{d}\, e_j: j \in [d]\}$ in
\Href{Conjecture}{conj:energy}, and
$v_i \in \{\pm e_j/\sqrt{2}: j \in [d]\}$ in
\Href{Conjecture}{conj:gaussian-functional}. In the latter case,
$C = \sqrt{2}[-1, 1]^d$ and $A_\gamma(C) = 2 \id_d$.
\Href{Theorem}{thm:halfspaces} proves \Href{Conjecture}{conj:energy} when
all $b_i$ have the same norm $r$: the polar of
$\cof{b_i/r: i \in [2d]}$ is contained in $r \ball{d}$, so $r \geq \sqrt{d}$.
We show below that \Href{Conjecture}{conj:gaussian-functional} implies
\Href{Conjecture}{conj:energy}.

Bezdek and Litvak~\cite{bezdek2007vertex} introduced the \emph{vertex index}
of an origin-symmetric convex body $K$:
\[
 \operatorname{vein}(K)
 = \inf \braces{\sum_{i \in [N]} \norm{b_i}_K:
  K \subseteq \cof{b_i: i \in [N]}}.
\]
Here $\norm{\cdot}_K$ is the norm with unit ball $K$, and the infimum
is over all finite sets. They proposed the following conjecture.

\begin{conj}[Bezdek--Litvak, {\cite[Conjecture~B]{bezdek2007vertex}}]
\label{conj:vertex-index}
For every $d \geq 2$,
\[
 \operatorname{vein}(\ball{d}) = 2d^{3/2}.
\]
\end{conj}

This implies \Href{Conjecture}{conj:energy}, since the Cauchy--Schwarz
inequality gives
\[
 \sum_{i \in [2d]} \enorm{b_i}^2
 \geq \frac{1}{2d} \parenth{\sum_{i \in [2d]} \enorm{b_i}}^2 \geq 2d^2.
\]
Their result in dimensions two and three proves
\Href{Conjecture}{conj:energy} in these dimensions;
see \cite[Theorem~4.1]{bezdek2007vertex}.

\subsection{Quadratic forms and measures on vertices}

For a bounded full-dimensional polytope $C$ containing the origin in
its interior, a positive definite operator $Q$, and a probability
measure $\nu$ on $\vertices(C)$, put
\[
 S(C, Q) = \max_{x \in C} \iprod{Qx}{x}, \qquad
 A_\nu = \sum_{v \in \vertices(C)} \nu(v)\, v \otimes v.
\]
For polars of tight frames of $2d$ vectors, \Href{Conjecture}{conj:energy}
is equivalent to
\begin{equation} \label{eq:quadratic}
 S(C, Q) \tr Q^{-1} \geq 2d^2 \qquad \text{for every } Q \succ 0.
\end{equation}
Indeed, the vectors $b_i = \sqrt{S(C, Q)}\, Q^{-1/2} v_i$ have a convex
hull containing $\ball{d}$, and their total squared norm is
$S(C, Q) \tr Q^{-1}$. Conversely, given the vectors in
\Href{Conjecture}{conj:energy}, put
\[
 M = \sum_{i \in [2d]} b_i \otimes b_i, \qquad
 v_i = M^{-1/2} b_i, \qquad Q = M^{-1}.
\]
Then $v_1, \dots, v_{2d}$ is a tight frame and
$C \subseteq M^{1/2} \ball{d}$. Thus, $S(C, Q) \leq 1$, and
\eqref{eq:quadratic} implies \eqref{eq:energy}.

The formulation by measures on vertices follows from the next identity.

\begin{prp}\label{prp:duality}
For every bounded full-dimensional polytope $C$ with
$0 \in \operatorname{int} C$,
\begin{equation} \label{eq:duality}
 \inf_{Q \succ 0} S(C, Q) \tr Q^{-1}
 = \max_\nu \parenth{\tr \sqrt{A_\nu}}^2
 = \min_{\substack{L \succ 0\\C \subseteq L^{1/2} \ball{d}}} \tr L.
\end{equation}
The maximum is over probability measures on $\vertices(C)$.
\end{prp}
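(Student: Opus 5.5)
We prove \eqref{eq:duality} by showing two elementary inequalities, each side bounding the middle quantity, and then establishing equality through a minimax argument. Throughout, write $V = \vertices(C)$. Since $C$ is the convex hull of $V$ and $x \mapsto \iprod{Qx}{x}$ is convex, we have $S(C,Q) = \max_{v \in V} \iprod{Qv}{v}$. Similarly, $C \subseteq L^{1/2}\ball{d}$ if and only if $\iprod{L^{-1}v}{v} \le 1$ for all $v \in V$.

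\emph{The outer quantities coincide.} Given $L$ feasible, put $Q = L^{-1}$. Then $S(C,Q) \le 1$, so $S(C,Q)\tr Q^{-1} \le \tr L$. Conversely, given $Q \succ 0$, put $L = S(C,Q)\,Q^{-1}$. Then $\iprod{L^{-1}v}{v} = \iprod{Qv}{v}/S(C,Q) \le 1$, so $L$ is feasible with $\tr L = S(C,Q)\tr Q^{-1}$. Hence the infimum over $Q$ equals the infimum over $L$. That infimum is attained: the feasible set is closed and convex, $\tr L$ is coercive on it, and feasibility forces $L \succeq v\otimes v$ for each vertex. Since $C$ is full dimensional, this keeps $L$ away from singular matrices. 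This justifies writing $\min$.

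\emph{Weak duality.} For any probability measure $\nu$ on $V$ and any feasible $L$,
\[
 1 \ge \sum_v \nu(v)\iprod{L^{-1}v}{v} = \iprod{L^{-1}}{A_\nu}.
\]
Apply Cauchy--Schwarz in Hilbert--Schmidt form to $A_\nu^{1/2} = L^{1/4}\cdot L^{-1/4}A_\nu^{1/2}$:
\[
 \tr A_\nu^{1/2} = \iprod{L^{1/2}}{L^{-1/2}A_\nu^{1/2}} \le \norm{L^{1/2}}_{\HS}\,\norm{L^{-1/2}A_\nu^{1/2}}_{\HS} = \sqrt{\tr L}\,\sqrt{\iprod{L^{-1}}{A_\nu}} \le \sqrt{\tr L}.
\]
Hence $(\tr\sqrt{A_\nu})^2 \le \tr L$. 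The maximum over $\nu$ exists because the simplex of measures is compact and $\nu \mapsto \tr\sqrt{A_\nu}$ is continuous.

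\emph{Strong duality, the main step.} We show $\max_\nu (\tr\sqrt{A_\nu})^2 \ge \min \tr L$. The plan is Lagrangian/KKT duality for the convex program of minimizing $\tr L$ subject to $\iprod{L^{-1}v}{v}\le 1$ for $v\in V$. The constraint functions $L\mapsto \iprod{L^{-1}v}{v}$ are convex on $L \succ 0$, and Slater's condition holds with $L = R\,\id$ for large $R$. So there exist multipliers $\lambda_v \ge 0$ with $\lambda_v = 0$ on inactive constraints, and stationarity at the optimal $L$ reads
\[
 \id = \sum_v \lambda_v L^{-1}(v\otimes v)L^{-1}, \qquad\text{that is,}\qquad L^2 = \sum_v \lambda_v\, v\otimes v.
\]
Taking the trace against $L^{-1}$ and using complementary slackness gives $\tr L = \sum_v \lambda_v$. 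Set $\nu = \lambda/\sum_v\lambda_v$. Then $A_\nu = L^2/\tr L$, so
\[
 (\tr\sqrt{A_\nu})^2 = \frac{(\tr L)^2}{\tr L} = \tr L,
\]
which matches the minimum. An alternative is to apply Sion's minimax theorem to $\iprod{L^{-1}}{A_\nu}$. The delicate points are verifying convexity of $L\mapsto\iprod{L^{-1}v}{v}$ (it is operator convex) and correctly computing its gradient, $-L^{-1}(v\otimes v)L^{-1}$. Combining the three parts gives \eqref{eq:duality}.
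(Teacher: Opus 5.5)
Your proof is correct, but your strong-duality step works from the opposite side of the paper's.

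\textbf{The paper's route.} It argues on the maximization side. It takes a maximizing measure with operator $A_*$ and shows $A_* \succ 0$ by mixing with a nondegenerate second moment operator: the gain on the kernel is of order $\sqrt{\varepsilon}$, the loss elsewhere is of order $\varepsilon$. It then uses the first-order condition for the concave functional $\tr\sqrt{A}$ in the directions $v\otimes v - A_*$. This gives $\iprod{A_*^{-1/2}v}{v}\le \tr\sqrt{A_*}$ for every vertex, so $Q = A_*^{-1/2}$ is an explicit certificate of equality.

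\textbf{Your route.} You argue on the minimization side. You obtain a minimizing $L$ by coercivity. Your observation that feasibility is equivalent to $L\succeq v\otimes v$ for every vertex is exactly what makes the feasible set closed and keeps it away from singular matrices. You then invoke KKT under Slater's condition to get multipliers with $L^2 = \sum_v \lambda_v\, v\otimes v$ and $\sum_v\lambda_v = \tr L$, and you normalize them into a measure.

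\textbf{What each buys.} The paper's argument is self-contained: it needs only one-sided directional derivatives on the simplex, at the cost of a separate nonsingularity argument. Yours avoids that issue, since $L\succ 0$ comes for free, but it uses the Lagrange duality theorem as a black box. In return you get a structural statement the paper leaves implicit: an optimal measure is supported on the contact points of a minimal-trace ellipsoid, with $A_\nu = L^2/\tr L$. The two constructions agree: the paper's optimizer corresponds to $L = \tr\sqrt{A_*}\,A_*^{1/2}$, for which indeed $L^2/\tr L = A_*$.

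One small slip: the factorization $L^{1/4}\cdot L^{-1/4}A_\nu^{1/2}$ announced before your weak-duality display does not match the display. The display itself correctly uses $L^{1/2}$ and $L^{-1/2}A_\nu^{1/2}$.
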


\begin{proof}
For every $Q \succ 0$ and every $\nu$, the Cauchy--Schwarz inequality
for the Hilbert--Schmidt norm gives
\[
 \parenth{\tr \sqrt{A_\nu}}^2
 = \iprod{Q^{1/2} A_\nu^{1/2}}{Q^{-1/2}}^2
 \leq \iprod{Q}{A_\nu} \tr Q^{-1}
 \leq S(C, Q) \tr Q^{-1}.
\]
Choose a maximizing measure, write $A_*$ for its operator, and put
$g_* = \tr \sqrt{A_*}$. The operator $A_*$ is positive definite.
Indeed, the vertices span $\R^d$, so there is a second moment operator
$A_0 \succeq c \id_d$ with $c > 0$. If $A_*$ were singular, the eigenvalues of
$(1 - \varepsilon)A_*+\varepsilon A_0$ would be at least
$(1 - \varepsilon) \lambda_j(A_*) + \varepsilon c$ for $j \in [d]$.
Thus, mixing with $A_0$ gives a gain of order $\sqrt{\varepsilon}$ on the
kernel and a possible loss of order $\varepsilon$ elsewhere, contradicting
maximality.

The derivative of $\tr \sqrt{A}$ at $A_*$ in direction $H$ is
$\frac{1}{2} \iprod{A_*^{-1/2}}{H}$. Optimality in the directions
$v \otimes v - A_*$ gives
\[
 \iprod{A_*^{-1/2} v}{v} \leq g_*
 \qquad \text{for every } v \in \vertices(C).
\]
For $Q = A_*^{-1/2}$, we therefore have
$S(C, Q) \leq g_*$ and $\tr Q^{-1} = g_*$, proving the first equality.
Finally, $L = S(C, Q)Q^{-1}$ satisfies $C \subseteq L^{1/2} \ball{d}$ and
$\tr L = S(C, Q) \tr Q^{-1}$. Conversely, every admissible $L$ satisfies
$S(C, L^{-1}) \leq 1$. This proves the second equality and attainment.
\end{proof}

In particular, \eqref{eq:quadratic} is equivalent to the existence
of a vertex measure $\nu$ with $\tr \sqrt{A_\nu} \geq \sqrt{2}\, d$.
For the prescribed Gaussian measure, the same Cauchy--Schwarz calculation
gives
\begin{equation} \label{eq:gaussian-functional-bound}
 S(C, Q) \tr Q^{-1}
 \geq \iprod{Q}{A_\gamma(C)} \tr Q^{-1}
 \geq \parenth{\tr \sqrt{A_\gamma(C)}}^2.
\end{equation}
This proves the implication from \Href{Conjecture}{conj:gaussian-functional}
to \Href{Conjecture}{conj:energy}.
\Href{Theorem}{thm:gaussian-vertices} controls $\tr A_\gamma(P)$ when the
facet normals are unit vectors. Passing to a tight frame changes the
Gaussian cone weights, so it does not yield
\Href{Conjecture}{conj:gaussian-functional}.

\section{An independent spectral estimate}\label{sec:spectral}

Our starting point is the following result of Ball and Prodromou.
We used it with Nasz\'odi in \cite[Section~4]{ivanov2022quantitative}
to obtain lower bounds in the quantitative Helly theorems.

\begin{prp}[Ball--Prodromou, {\cite[Theorem~1.4]{Ball2009}}]
\label{prp:ball-prodromou}
Let $v_1, \dots, v_m \in \R^d$ be a tight frame, and let $Q$ be a positive
semidefinite operator on $\R^d$. There is a point $x \in \R^d$ such that
\[
 \abs{\iprod{v_i}{x}} \leq 1 \quad \text{for every } i \in [m],
 \qquad \iprod{Qx}{x} \geq \tr Q.
\]
\end{prp}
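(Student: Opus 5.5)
We need a proof of the Ball–Prodromou statement: for a tight frame $v_1,\dots,v_m$ and $Q \succeq 0$, there is an $x$ with $\abs{\iprod{v_i}{x}} \leq 1$ for all $i$ and $\iprod{Qx}{x} \geq \tr Q$. The plan is a variational argument. Since $\sum_i v_i \otimes v_i = \id_d$, the vectors span $\R^d$, so the symmetric polytope
\[
 K = \{x \in \R^d : \abs{\iprod{v_i}{x}} \leq 1 \text{ for every } i \in [m]\}
\]
is compact. If $Q = 0$ there is nothing to prove, so assume $\tr Q > 0$. Let $x_0$ be a maximizer of the convex function $x \mapsto \iprod{Qx}{x}$ over $K$. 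We aim to show $\iprod{Qx_0}{x_0} \geq \tr Q$. The maximizer of a convex function alone is too weak, so we use its first-order (KKT) conditions.

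\textbf{KKT conditions.} At $x_0$, the gradient $2Qx_0$ lies in the normal cone of $K$ at $x_0$. Hence
\[
 Qx_0 = \sum_{i \in I} c_i \varepsilon_i v_i, \qquad c_i \geq 0,
\]
where $I$ is the set of active constraints and $\varepsilon_i = \iprod{v_i}{x_0} \in \{\pm 1\}$. Pairing with $x_0$ gives
\[
 \iprod{Qx_0}{x_0} = \sum_{i \in I} c_i.
\]
Thus it suffices to prove $\sum_{i \in I} c_i \geq \tr Q$.

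\textbf{Using tightness.} We have $\tr Q = \sum_i \iprod{Qv_i}{v_i}$. Expressing $\tr Q$ through the multipliers $c_i$ and the frame identity requires more than first-order information at one point. A single maximizer of a general $Q$ need not satisfy the inequality pairwise in $i$, so second-order information is needed.

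\textbf{Second-order conditions.} At a local maximum of a convex quadratic over a polytope, second-order conditions apply along feasible directions. Moving along a face, with $\iprod{v_i}{h} = 0$ for $i \in I$, must not increase the objective to second order. Since $Q \succeq 0$, this forces the directions in the face to be limited, and in fact $x_0$ is a vertex. So $\{v_i\}_{i \in I}$ spans $\R^d$.

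\textbf{Main obstacle.} The hard step is the inequality $\sum_{i \in I} c_i \geq \sum_i \iprod{Qv_i}{v_i}$. Here I would try the averaging trick of Ball–Prodromou. Instead of a single maximizer, consider the random point $x = \sum_i \varepsilon_i w_i$, or the Gaussian average
\[
 \mathbb E \iprod{Qg}{g} = \tr Q
\]
for $g$ standard Gaussian. Then map $g$ into $K$ by a contraction, for example radial retraction in the norm $\max_i \abs{\iprod{v_i}{\cdot}}$, which does not decrease the quadratic form too much. That fails to give the sharp constant, though.

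So the route I would actually commit to is a determinant/volume maximization. Maximize $\iprod{Qx}{x}$ over the subset of $K$ in which the tight-frame structure yields a John-type decomposition. Choose $x$ among the vertices of $K$ with probability weights $\nu$ for which $\sum_v \nu(v)\, v \otimes v \succeq \id_d$ in trace against $Q$. Such a measure exists exactly when
\[
 \max_\nu \, \iprod{Q}{A_\nu} \geq \tr Q .
\]
By the minimax principle, as in \Href{Proposition}{prp:duality}, this is equivalent to showing that no positive semidefinite $Q$ with $\tr Q > \max_{x \in K} \iprod{Qx}{x}$ exists. This reformulation is the crux, and I expect the genuine difficulty to lie in proving it via the tight-frame identity.
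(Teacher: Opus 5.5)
Your proposal stops where the actual content of the theorem begins, and the reformulation you commit to is circular. The map $\nu \mapsto \iprod{Q}{A_\nu}$ is linear, so its maximum over probability measures on $\vertices(K)$ is attained at a point mass. Hence $\max_\nu \iprod{Q}{A_\nu} = \max_{x \in K} \iprod{Qx}{x}$, and ``no $Q \succeq 0$ with $\tr Q > \max_{x \in K} \iprod{Qx}{x}$'' is just the proposition restated. Its minimax form, a single $\nu$ with $A_\nu \succeq \id_d$, is equivalent to it by von Neumann's theorem, so nothing has been gained.

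The KKT discussion does not close the gap either. The claim that the maximizer must be a vertex is false for singular $Q$, although you may always pick a maximizing vertex. More importantly, at a vertex $x_0$ of $K$ the tight-frame identity only gives $\enorm{x_0}^2 = \sum_{i \in [m]} \iprod{v_i}{x_0}^2 \geq d$, because $d$ constraints are active. That settles $Q = \lambda \id_d$ but not anisotropic $Q$. For $Q = e \otimes e$ with $\enorm{e} = 1$, the needed bound comes instead from the endpoint $z$ of the segment $K \cap \R e$. It has an active constraint, so $\iprod{e}{z}^2 = \enorm{z}^2 \geq 1$.

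The missing idea is to apply this vertex count to lower-dimensional sections along the eigenvector flag of $Q$. The paper does not reprove the proposition; it quotes it from Ball and Prodromou. The proof of \Href{Theorem}{thm:spectral} adapts their argument, with $f_m(k)$ in place of $k$. The argument runs as follows.

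\textbf{Vertex count.} Let $F$ be a $k$-dimensional subspace and $z$ a vertex of the polytope $K \cap F$. At least $k$ constraints $\abs{\iprod{v_i}{z}} = 1$ are active, so $\enorm{z}^2 = \sum_{i \in [m]} \iprod{v_i}{z}^2 \geq k$.

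\textbf{Flag.} Let $\lambda_1 \geq \dots \geq \lambda_d \geq 0$ be the eigenvalues of $Q$, and put $\lambda_{d + 1} = 0$. Let $F_k$ be the span of the first $k$ eigenvectors. In eigen-coordinates, set $q_k(x) = \sum_{j \in [k]} (\lambda_j - \lambda_{k + 1}) x_j^2$, with $q_0 = 0$ and $z_0 = 0$. Choose a vertex $z_k$ of $K \cap F_k$ maximizing the convex function $q_{k - 1}$.

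\textbf{Telescoping.} For $x \in F_k$ we have $q_k(x) = q_{k - 1}(x) + (\lambda_k - \lambda_{k + 1}) \enorm{x}^2$. Since $z_{k - 1} \in K \cap F_k$, this gives $q_k(z_k) \geq q_{k - 1}(z_{k - 1}) + (\lambda_k - \lambda_{k + 1}) k$. Summing over $k$ yields $\iprod{Q z_d}{z_d} = q_d(z_d) \geq \sum_{k \in [d]} (\lambda_k - \lambda_{k + 1}) k = \tr Q$, with $z_d \in K$.
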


When applied to one-sided inequalities $\iprod{v_i}{x} \leq 1$, this
proposition passes to the symmetric strips $\abs{\iprod{v_i}{x}} \leq 1$.
The resulting bound is unchanged if any $v_i$ is replaced by $-v_i$,
so it loses the information carried by the directions of the normals.
The argument below retains the one-sided inequalities and uses this
information to improve the estimate. A zero-sum condition on the normals
first gives an additional contribution from the inactive inequalities.
Taking Cartesian powers and projecting onto a hyperplane then removes
that condition with no loss in the final estimate.
This observation suggested to the author that Gaussian cone measures
could organize the corresponding angle sums, leading to the proof in
\Href{Section}{sec:halfspace-proof}.

The proof below follows the eigenvector flag used by Ball and Prodromou.
A related use of singular subspaces and vertices of sections appears
in the vertex-index estimate of Gluskin and
Litvak~\cite[Section~5]{gluskin2012vertex}.

For integers $m > d$, put
\begin{equation} \label{eq:spectral-coefficients}
 f_m(k) = \frac{mk}{m - k}, \qquad
 c_{m, k} = f_m(k) - f_m(k - 1) = \frac{m^2}{(m - k)(m - k + 1)}
 \qquad \text{for } k \in [d],
\end{equation}
and note that $f_m(0) = 0$.

\begin{thm}\label{thm:spectral}
Let $v_1, \dots, v_m \in \R^d$ be a tight frame with
$0 \in \operatorname{int} \cof{v_i: i \in [m]}$, and put
\[
 C = \{x \in \R^d: \iprod{v_i}{x} \leq 1, \ i \in [m]\}.
\]
Let $Q$ be positive definite, with eigenvalues
$\lambda_1 \geq \dots \geq \lambda_d > 0$. Then
\begin{equation} \label{eq:spectral}
 S(C, Q): = \max_{x \in C} \iprod{Qx}{x}
 \geq \sum_{k \in [d]} c_{m, k} \lambda_k.
\end{equation}
\end{thm}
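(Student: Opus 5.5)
The plan follows the eigenvector flag of Ball and Prodromou and adds the one-sided information through a zero-sum argument. Let $e_1, \dots, e_d$ be an orthonormal eigenbasis of $Q$ with $Qe_k = \lambda_k e_k$, and put $E_k = \operatorname{span}\{e_1, \dots, e_k\}$ and $\lambda_{d+1} = 0$. Abel summation gives
\[
 \iprod{Qx}{x} = \sum_{k \in [d]} (\lambda_k - \lambda_{k+1}) \enorm{P_{E_k} x}^2,
 \qquad
 \sum_{k \in [d]} c_{m,k}\lambda_k = \sum_{k \in [d]} (\lambda_k - \lambda_{k+1}) f_m(k).
\]
All coefficients $\lambda_k - \lambda_{k+1}$ are nonnegative. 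So it suffices to find a single $x \in C$ with
\[
 \enorm{P_{E_k} x}^2 \geq f_m(k) = \frac{mk}{m-k} \qquad \text{for every } k \in [d].
\]
This is the Ball--Prodromou scheme with the target $k$ replaced by $mk/(m-k)$.

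\textbf{Step 1: the zero-sum case.} I first assume $\sum_i v_i = 0$. The basic estimate is the following. Let $y$ be a point with $\iprod{v_i}{y} \leq 1$ for all $i$ and $\iprod{v_i}{y} = 1$ for $i \in I$, $\abs{I} = k$. The zero-sum condition gives $\sum_{i \notin I} \iprod{v_i}{y} = -k$. Cauchy--Schwarz on the $m-k$ inactive terms then gives
\[
 \enorm{y}^2 = \sum_{i \in [m]} \iprod{v_i}{y}^2 \geq k + \frac{k^2}{m-k} = f_m(k).
\]
The first equality uses tightness.

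Inside $E_k$, the projected vectors $P_{E_k} v_i$ again form a tight frame for $E_k$, and they still sum to zero. Hence any vertex $y$ of $C \cap E_k$ satisfies $\enorm{y}^2 \geq f_m(k)$. Such a vertex has at least $k$ active constraints, and extra active constraints only help: $k' \geq k$ active constraints give $f_m(k') \geq f_m(k)$.

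To obtain one $x$ that works for all $k$ at once, I will copy the Ball--Prodromou flag construction. I build $x$ so that for each $k$ its active set contains $k$ constraints, $\iprod{v_i}{x} = 1$ for those $i$, and these constraints already determine $P_{E_k}x$. Concretely, I would choose the active sets greedily along the flag: take a vertex of $C \cap E_1$, then extend along $E_2$ keeping the old active constraints, and so on. I then apply the Cauchy--Schwarz estimate to the frame $P_{E_k} v_i$ and the point $P_{E_k}x$. The feasibility $\iprod{P_{E_k}v_i}{P_{E_k}x} \leq 1$ for inactive $i$ is needed for this estimate, and I must check carefully that the construction guarantees it.

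\textbf{Step 2: removing the zero-sum condition.} For a general tight frame, I follow the hint in the text: take Cartesian powers and then project. Consider $N$ orthogonal copies of the frame, scaled by $1/\sqrt{N}$, in $(\R^d)^N$. Then project all $mN$ vectors onto the hyperplane orthogonal to their total sum, and renormalize to restore tightness. The new frame sums to zero, the polytope is a section of $C^N$ (up to scaling), and Step 1 applies with $mN$ vectors. I take $Q$ to be the block-diagonal operator that acts as $Q$ in one coordinate block, together with a suitable flag. The bound from Step 1 must then be transferred back to a point of $C$ in one block. The corrections caused by the projection are of relative order $1/N$, so letting $N \to \infty$ should recover exactly $\sum_k c_{m,k}\lambda_k$.

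\textbf{Main obstacle.} I expect the difficulty to be in the two bookkeeping steps rather than in the Cauchy--Schwarz estimate. First, making the flag construction give all $d$ inequalities simultaneously, with the correct count of active constraints at each level. Second, checking that the power-and-project trick preserves the count $m$ in $f_m$ in the limit, and does not replace it by $mN$ or $m+1$. For the second point, I would first test the construction on $d = 1$, which determines the right scaling.
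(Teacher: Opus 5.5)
Your Abel summation and your vertex bound for sections in the zero-sum case are correct, and they are exactly the ingredients the paper uses. The way you assemble them, however, has a genuine gap.

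You reduce to finding \emph{one} $x\in C$ with $\enorm{P_{E_k}x}^2\ge f_m(k)$ for every $k$. This is a stronger statement than the theorem requires, and your greedy construction does not produce it. Nesting the active sets $I_1\subset\dots\subset I_d$ gives $\iprod{v_i}{x}=1$ for $i\in I_k$ at the final point. Your Cauchy--Schwarz estimate in $E_k$, however, needs $\iprod{P_{E_k}v_i}{P_{E_k}x}=\iprod{v_i}{P_{E_k}x}=1$. The two differ by $\iprod{v_i}{x-P_{E_k}x}$, which has no reason to vanish. The $k$ equations $\iprod{v_i}{x}=1$, $i\in I_k$, determine the projection of $x$ onto $\operatorname{span}\{v_i: i\in I_k\}$, not onto $E_k$. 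Feasibility of the inactive constraints, which you single out, plays no role: Cauchy--Schwarz uses only the $k$ equalities and the zero sum.

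Choosing endpoints cleverly does not obviously help either. Along each new edge every function $\enorm{P_{E_j}\cdot}^2$ is convex, so some endpoint does not decrease it, but different levels $j$ may require different endpoints.

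The missing idea is that only the weighted sum with the known weights $\lambda_k-\lambda_{k+1}\ge 0$ matters, so the point is allowed to change with $k$. The paper sets $q_k(x)=\sum_{j\in[k]}(\lambda_j-\lambda_{k+1})x_j^2$ in the eigenbasis and picks $z_k$ to be a vertex of $C\cap E_k$ maximizing $q_{k-1}$. Two facts hold: $z_{k-1}\in C\cap E_k$, and $q_k=q_{k-1}+(\lambda_k-\lambda_{k+1})\enorm{\cdot}^2$ on $E_k$. With your vertex bound they give $q_k(z_k)\ge q_{k-1}(z_{k-1})+(\lambda_k-\lambda_{k+1})f_m(k)$. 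Telescoping to $q_d(z_d)=\iprod{Qz_d}{z_d}$ proves the zero-sum case. Nothing is claimed about the individual projections of $z_d$.

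Step 2 also fails as written. Suppose, as you write, that $Q$ acts in only one of the $N$ blocks. Then each $\lambda_j$ has multiplicity one among the $Nd$ eigenvalues. The zero-sum bound for the $Nm$ projected vectors then involves only the coefficients $c_{Nm,k}$ with $k\le d$, and these tend to $1$. The limit therefore gives at most $\tr Q$, the Ball--Prodromou bound: $m$ really is replaced by $Nm$, which is the danger you anticipated.

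The paper takes $Q_N=Q\oplus\dots\oplus Q$ on all blocks. Then $\max_{x\in C^N}\iprod{Q_Nx}{x}=N\,S(C,Q)$, because the maximum splits over blocks, so no point has to be transferred back. One then proceeds as follows:
\begin{itemize}
\item apply the zero-sum case to the compression of $Q_N$ to $E$;
\item use interlacing $\mu_k\ge\Lambda_{k+1}$;
\item collect, for each increment $\lambda_j-\lambda_{j+1}$, the coefficient $f_{Nm}(Nj-1)$.
\end{itemize}
Since $f_{Nm}(Nj)=N f_m(j)$, one has $N^{-1}f_{Nm}(Nj-1)\to f_m(j)$.

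No rescaling by $1/\sqrt N$ is needed. The $N$ copies already form a tight frame in $\R^{Nd}$. Their orthogonal projections onto $E$ form a tight frame in $E$ with zero sum, and its polar is exactly $C^N\cap E$.
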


\begin{proof}
First, assume that $\sum_{i \in [m]} v_i = 0$.
Let $F$ be a linear subspace of dimension $k$, and let $z$ be a vertex of
$C \cap F$. Choose an index set $I \subseteq [m]$ of size $k$ such that the
corresponding inequalities are active at $z$ and their normals projected
onto $F$ are linearly independent. For $j \in [m] \setminus I$, write
$t_j = \iprod{v_j}{z}$. The zero-sum condition gives
$\sum_{j \in [m] \setminus I} t_j = -k$, while the tight-frame identity gives
\begin{equation} \label{eq:face-deficit}
 \begin{split}
 \enorm{z}^2
 & = k + \sum_{j \in [m] \setminus I} t_j^2\\
 & = f_m(k) + \sum_{j \in [m] \setminus I}
  \parenth{t_j + \frac{k}{m - k}}^2.
 \end{split}
\end{equation}
In particular, every vertex of this section has squared norm at least
$f_m(k)$.

Choose an orthonormal eigenbasis of $Q$, let $F_k$ be the span of its first
$k$ vectors, and put $\lambda_{d + 1} = 0$. Define
\[
 q_k(x) = \sum_{j \in [k]}(\lambda_j - \lambda_{k + 1})x_j^2,
 \qquad q_0 = 0.
\]
Starting with $z_0 = 0$, choose a vertex $z_k$ of $C \cap F_k$ maximizing
$q_{k - 1}$. Such a vertex exists because $q_{k - 1}$ is convex and the
section is a polytope. Here $q_{k - 1}$ ignores coordinate $k$.
The previous point $z_{k - 1}$ belongs to this section, so
\[
 \begin{split}
 q_k(z_k)
 & = q_{k - 1}(z_k) + (\lambda_k - \lambda_{k + 1}) \enorm{z_k}^2\\
 & \geq q_{k - 1}(z_{k - 1})
  +(\lambda_k - \lambda_{k + 1})f_m(k).
 \end{split}
\]
Adding these inequalities yields
\[
 S(C, Q) \geq \sum_{k \in [d]}(\lambda_k - \lambda_{k + 1})f_m(k)
 = \sum_{k \in [d]} c_{m, k} \lambda_k.
\]
This proves the theorem in the zero-sum case.

For the general case, take $N$ orthogonal copies of the tight
frame in $\R^{Nd}$. Their polar is $C^N$, and put
$Q_N = Q \oplus \dots \oplus Q$. Choose a hyperplane $E$ perpendicular to
the sum of all the copied vectors. If that sum is zero, choose any
hyperplane. Orthogonal projection onto $E$ produces a tight frame
of $Nm$ vectors in $E$ with sum zero. Their polar in $E$ is $C^N \cap E$.

Let $\pi_E$ denote orthogonal projection onto $E$, and define the
self-adjoint operator $Q_E: E \to E$ by
\[
 Q_E x = \pi_E(Q_Nx) \qquad \text{for } x \in E.
\]
It represents the restriction of the quadratic form to $E$, since
$\iprod{Q_E x}{x} = \iprod{Q_Nx}{x}$ for every $x \in E$.
Write $\mu_1 \geq \dots \geq \mu_{Nd - 1}$ for the eigenvalues of $Q_E$,
and $\Lambda_1 \geq \dots \geq \Lambda_{Nd}$ for those of $Q_N$.
The eigenvalue interlacing theorem gives
$\mu_k \geq \Lambda_{k + 1}$ for $k \in [Nd - 1]$; see
\cite[Section~4.3]{Horn2012}. Each $\lambda_j$ appears $N$ times
in the latter list. The zero-sum case therefore implies
\[
 \begin{split}
 N S(C, Q)
 & = \max_{x \in C^N} \iprod{Q_Nx}{x}\\
 & \geq \sum_{k \in [Nd - 1]} c_{Nm, k} \mu_k
 \geq \sum_{j \in [d]}(\lambda_j - \lambda_{j + 1})f_{Nm}(Nj - 1).
 \end{split}
\]
For the last step, collect the first $Nj - 1$ coefficients for each
increment $\lambda_j - \lambda_{j + 1}$. Divide by $N$ and let
$N \to \infty$. Since
$N^{-1} f_{Nm}(Nj - 1) \to f_m(j)$, this proves \eqref{eq:spectral}.
\end{proof}

Put
\begin{equation} \label{eq:spectral-constant}
 \eta_{m, d}: = m \sum_{k \in [d]}
  \frac{1}{\sqrt{(m - k)(m - k + 1)}}.
\end{equation}

\begin{cor}\label{cor:spectral}
Under the assumptions of \Href{Theorem}{thm:spectral},
\begin{equation} \label{eq:quadratic-bound}
 S(C, Q) \tr Q^{-1} \geq \eta_{m, d}^2
 \geq m^2 \ln^2 \frac{m}{m - d}.
\end{equation}
In particular, for $m = 2d$ this gives
\[
 S(C, Q) \tr Q^{-1} \geq 4(\ln 2)^2d^2.
\]
The intersection of $2d$ halfspaces containing $\ball{d}$ consequently
contains a point of norm at least
\begin{equation} \label{eq:spectral-radius}
 \frac{\eta_{2d, d}}{\sqrt{2d}}
 \geq \sqrt{2} \ln 2\, \sqrt{d}
 > 0.98 \sqrt{d}.
\end{equation}
\end{cor}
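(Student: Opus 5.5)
The plan is to combine \Href{Theorem}{thm:spectral} with a Cauchy--Schwarz optimization over the eigenvalues of $Q$. First, the only relevant consequence of \Href{Theorem}{thm:spectral} is
\[
 S(C, Q) \geq \sum_{k \in [d]} c_{m, k} \lambda_k .
\]
Since $S(C,Q)\tr Q^{-1}$ is invariant under scaling $Q$, we may write $\tr Q^{-1} = \sum_k \lambda_k^{-1}$. Then Cauchy--Schwarz gives
\[
 \Bigl(\sum_{k} c_{m,k}\lambda_k\Bigr)\Bigl(\sum_k \lambda_k^{-1}\Bigr)
 \geq \Bigl(\sum_k \sqrt{c_{m,k}}\Bigr)^2 = \eta_{m,d}^2 ,
\]
because $\sqrt{c_{m,k}} = m/\sqrt{(m-k)(m-k+1)}$. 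The ordering constraint $\lambda_1 \ge \dots \ge \lambda_d$ plays no role in this lower bound, so it holds for every $Q \succ 0$.

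Second, for the logarithmic bound, compare each summand with an integral. Since $\sqrt{(m-k)(m-k+1)} \le m-k+\tfrac12$ by AM--GM, we have
\[
 \frac{1}{\sqrt{(m-k)(m-k+1)}} \geq \frac{1}{m-k+\frac12}.
\]
By convexity of $s\mapsto 1/s$ (midpoint rule), the right-hand side is at least $\int_{m-k}^{m-k+1} \frac{ds}{s}$. Summing over $k\in[d]$ telescopes to $\ln\frac{m}{m-d}$, so $\eta_{m,d} \ge m\ln\frac{m}{m-d}$. Setting $m=2d$ gives $4(\ln 2)^2 d^2$.

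Third, for the radius statement, start from halfspaces tangent to $\ball{d}$, as in the deduction of \Href{Theorem}{thm:halfspaces}. Move each hyperplane inward to tangency; if fewer than $2d$ distinct normals remain, the resulting bound only improves, or we can note that the bound is monotone in $m$. Write the unit normals $a_i$ with frame operator $M=\sum a_i\otimes a_i$, so that $\tr M = m = 2d$. Set $v_i = M^{-1/2}a_i$, which is a tight frame. Its polar satisfies $C = M^{1/2}P_0$, and $0\in\operatorname{int}\cof{v_i}$ since $P_0$ is bounded. Take $Q = M^{-1}$. Then
\[
 S(C,Q) = \max_{x\in P_0}\enorm{x}^2, \qquad \tr Q^{-1} = \tr M = 2d .
\]
Hence $\max_{P}\enorm{x}^2 \ge \eta_{2d,d}^2/(2d)$, which is \eqref{eq:spectral-radius}. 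The final numeric estimate is $\sqrt2\ln 2 \approx 0.9803$.

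The main obstacle is handling $m<2d$ after merging repeated normals, since then $\tr M=m$ and $\eta_{m,d}/\sqrt m$ must be compared with $\eta_{2d,d}/\sqrt{2d}$. The cleanest fix is to keep duplicate normals in the frame: repeated vectors are allowed in a tight frame, and the hypotheses of \Href{Theorem}{thm:spectral} do not require the $v_i$ to be distinct. This avoids any monotonicity argument in $m$.
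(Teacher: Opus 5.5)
Your first step (Cauchy--Schwarz against $\sum_k \lambda_k^{-1}$) matches the paper. So does your third step: the frame $v_i = M^{-1/2}a_i$ with $Q = M^{-1}$, keeping repeated normals so that $\tr M = 2d$, which is what the paper does by indexing the normals by $i \in [2d]$. You should also state the trivial unbounded case separately.

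The gap is in the logarithmic estimate. For the convex function $s \mapsto 1/s$, the midpoint inequality goes the other way:
\[
 \frac{1}{j + \frac12} \leq \int_j^{j+1} \frac{\mathrm{d}s}{s} = \ln \frac{j+1}{j},
\]
strictly for every $j \geq 1$. Hence $\frac{1}{\sqrt{j(j+1)}} \geq \frac{1}{j+\frac12}$ and $\frac{1}{j+\frac12} \leq \ln\frac{j+1}{j}$ point in opposite directions and cannot be chained. Reversing a sign does not fix this, because the arithmetic mean is too weak an intermediate. The logarithmic mean $1/\ln\frac{j+1}{j}$ lies strictly between the geometric mean $\sqrt{j(j+1)}$ and the arithmetic mean $j+\frac12$.

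Concretely, for $d = 1$ and $m = 2$, your route gives only $\eta_{2,1} \geq \frac43 < 2\ln 2$. The radius bound it yields is only $\frac{2\sqrt2}{3} \approx 0.943$, below the claimed $0.98$. So neither $\eta_{m,d} \geq m\ln\frac{m}{m-d}$ nor the radius bound follows from your argument as written.

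What you need is the comparison of the geometric and logarithmic means,
\[
 \ln\frac{j+1}{j} \leq \frac{1}{\sqrt{j(j+1)}}, \qquad j \geq 1.
\]
The paper obtains this from the function $t/\sqrt{1+t} - \ln(1+t)$. It vanishes at $t = 0$ and has derivative $(\sqrt{1+t}-1)^2/(2(1+t)^{3/2}) \geq 0$; apply it with $t = 1/j$.

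Equivalently, put $x = \sqrt{(j+1)/j} \geq 1$. Then $x - 1/x = 1/\sqrt{j(j+1)}$ and $\ln\frac{j+1}{j} = 2\ln x$. Moreover, $2\ln x \leq x - 1/x$, since the difference vanishes at $x = 1$ and has derivative $(1-1/x)^2 \geq 0$.

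Summing over $j \in \{m-d, \dots, m-1\}$ gives $\eta_{m,d} \geq m\ln\frac{m}{m-d}$, and the rest of your proof then goes through.
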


\begin{proof}
The Cauchy--Schwarz inequality and \Href{Theorem}{thm:spectral} give
\[
 S(C, Q) \tr Q^{-1}
 \geq \parenth{\sum_{k \in [d]} c_{m, k} \lambda_k}
  \parenth{\sum_{k \in [d]} \lambda_k^{-1}}
 \geq \parenth{\sum_{k \in [d]} \sqrt{c_{m, k}}}^2 = \eta_{m, d}^2.
\]
For every integer $j \geq 1$,
$\ln((j + 1)/j) \leq 1/\sqrt{j(j + 1)}$. Indeed, the derivative of
$t/\sqrt{1 + t} - \ln(1 + t)$ equals
$(\sqrt{1 + t} - 1)^2/(2(1 + t)^{3/2}) \geq 0$.
Use $t = 1/j$ and sum over $j \in \{m - d, \dots, m - 1\}$ to obtain
$\eta_{m, d} \geq m \ln(m/(m - d))$.

For the radius statement, an unbounded intersection needs no argument.
Otherwise, move each bounding hyperplane inward until it touches
$\ball{d}$. Let $P_0$ be the resulting intersection and let
$a_i$, $i \in [2d]$, be its unit outer normals. Put
\[
 M = \sum_{i \in [2d]} a_i \otimes a_i, \qquad
 v_i = M^{-1/2} a_i, \qquad Q = M^{-1}.
\]
The vectors $v_i$ form a tight frame, their polar is $C = M^{1/2} P_0$, and
$S(C, Q) = \max_{x \in P_0} \enorm{x}^2$. Since $\tr Q^{-1} = \tr M = 2d$, the first inequality
of \eqref{eq:quadratic-bound} proves \eqref{eq:spectral-radius}.
\end{proof}

The quadratic estimate \eqref{eq:quadratic-bound} applies to arbitrary
$Q$. For a scalar $Q$ and $m = 2d$, \Href{Theorem}{thm:spectral} gives the
conjectured constant $2d^2$, since $\sum_{k \in [d]} c_{2d, k} = 2d$.

The same estimate has a combinatorial formulation in terms of a
measure on vertices.

\begin{cor}[A combinatorial dual inequality]\label{cor:combinatorial-dual}
Let $v_1, \dots, v_m \in \R^d$ be a tight frame with
$0 \in \operatorname{int} \cof{v_i: i \in [m]}$, and let
$C = \{x: \iprod{v_i}{x} \leq 1, \ i \in [m]\}$. There is a probability
measure $\nu$ on $\vertices(C)$ such that
\begin{equation} \label{eq:combinatorial-dual}
 \tr \sqrt{\sum_{v \in \vertices(C)} \nu(v)\, v \otimes v}
 \geq \eta_{m, d} \geq m \ln \frac{m}{m - d}.
\end{equation}
In particular, for $m = 2d$ the right-hand side is at least $2 \ln 2\, d$.
Equivalently, every ellipsoid $L^{1/2} \ball{d}$ centered at the origin
and containing $C$ satisfies $\tr L \geq \eta_{m, d}^2$.
\end{cor}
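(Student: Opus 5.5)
The corollary follows by combining \Href{Proposition}{prp:duality} with the quadratic estimate \eqref{eq:quadratic-bound} of \Href{Corollary}{cor:spectral}. No new analysis is needed; only the duality has to be set up correctly.

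\textbf{Step 1: apply the duality.} Since $0 \in \operatorname{int} \cof{v_i: i \in [m]}$, the polar $C$ is a bounded full-dimensional polytope with $0 \in \operatorname{int} C$. Hence \Href{Proposition}{prp:duality} applies, and it gives
\[
 \max_\nu \parenth{\tr \sqrt{A_\nu}}^2 = \inf_{Q \succ 0} S(C, Q) \tr Q^{-1}.
\]
The maximum on the left is attained, as shown in the proof of the proposition.

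\textbf{Step 2: bound the infimum.} By \Href{Corollary}{cor:spectral}, every positive definite $Q$ satisfies $S(C, Q) \tr Q^{-1} \geq \eta_{m, d}^2$. So the infimum is at least $\eta_{m, d}^2$. Take a maximizing measure $\nu$ and then take square roots; this yields $\tr \sqrt{A_\nu} \geq \eta_{m, d}$. The second inequality $\eta_{m, d} \geq m \ln(m/(m - d))$ was already proved in \Href{Corollary}{cor:spectral}. For $m = 2d$ it becomes $2d \ln 2$.

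\textbf{Step 3: the ellipsoid form.} This is the third expression in \eqref{eq:duality}. The minimum of $\tr L$ over all $L \succ 0$ with $C \subseteq L^{1/2} \ball{d}$ equals the same common value, so it is at least $\eta_{m, d}^2$. Directly: every admissible $L$ has $S(C, L^{-1}) \leq 1$. Therefore
\[
 \tr L \geq S(C, L^{-1}) \tr L \geq \eta_{m, d}^2 .
\]

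\textbf{Main obstacle.} There is no real difficulty here. The only point to check is that the hypotheses of \Href{Proposition}{prp:duality} hold, namely boundedness of $C$ and $0 \in \operatorname{int} C$. Both follow from the assumption $0 \in \operatorname{int} \cof{v_i}$ by polarity.
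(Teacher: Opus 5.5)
Your proposal is correct and is essentially the paper's proof: the paper also takes the infimum of \eqref{eq:quadratic-bound} over $Q \succ 0$ and applies the two identities of Proposition~\ref{prp:duality} to get both the vertex-measure bound and the ellipsoid form. Your direct check $\tr L \geq S(C, L^{-1}) \tr L \geq \eta_{m,d}^2$ for admissible $L$ simply writes out the second identity of that proposition.
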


\begin{proof}
Take the infimum of \eqref{eq:quadratic-bound} over $Q \succ 0$ and use
the two identities in \Href{Proposition}{prp:duality}.
\end{proof}

For comparison, the estimate $S(C, Q) \geq \tr Q$ from
\Href{Proposition}{prp:ball-prodromou} gives the lower bound $d$ for
the left-hand side of \eqref{eq:combinatorial-dual}, by
\Href{Proposition}{prp:duality}. Thus, \Href{Corollary}{cor:combinatorial-dual}
uses the additional information in the one-sided inequalities.
Its measure is obtained by optimizing the second moment operator;
it is not asserted to be the Gaussian vertex measure.

\section*{Use of AI}

The author used ChatGPT to assist with the preparation and proofreading
of the manuscript and with calculations involving Gaussian inequalities.

\bibliographystyle{alpha}
\bibliography{../work_current/uvolit}

\begin{thebibliography}{DLMLZ00}

\bibitem[AAGM15]{artstein2015asymptotic}
Shiri Artstein-Avidan, Apostolos Giannopoulos, and Vitali~D. Milman.
\newblock {\em Asymptotic geometric analysis, {P}art {I}}, volume 202.
\newblock American Mathematical Soc., 2015.

\bibitem[AHAK22]{almendra2022quantitative}
V{\'\i}ctor~Hugo Almendra-Hern{\'a}ndez, Gergely Ambrus, and Matthew Kendall.
\newblock Quantitative {H}elly-type theorems via sparse approximation.
\newblock {\em Discrete \& Computational Geometry}, pages 1--8, 2022.

\bibitem[And03]{anderson2003introduction}
Theodore~W. Anderson.
\newblock {\em An {I}ntroduction to {M}ultivariate {S}tatistical {A}nalysis}.
\newblock Wiley Series in Probability and Statistics. John Wiley \& Sons,
  Hoboken, NJ, 3rd edition, 2003.

\bibitem[Bar08]{baricz2008mills}
{\'A}rp{\'a}d Baricz.
\newblock Mills' ratio: {M}onotonicity patterns and functional inequalities.
\newblock {\em Journal of Mathematical Analysis and Applications},
  340(2):1362--1370, 2008.

\bibitem[BL07]{bezdek2007vertex}
Karoly Bezdek and Alexander~E. Litvak.
\newblock On the vertex index of convex bodies.
\newblock {\em Advances in Mathematics}, 215(2):626--641, 2007.

\bibitem[B{\"o}r04]{boroczky2004finite}
K{\'a}roly B{\"o}r{\"o}czky, Jr.
\newblock {\em Finite packing and covering}, volume 154 of {\em Cambridge
  Tracts in Mathematics}.
\newblock Cambridge University Press, Cambridge, 2004.

\bibitem[BP09]{Ball2009}
K.~M. Ball and M.~Prodromou.
\newblock A sharp combinatorial version of {V}aaler{\textquotesingle}s theorem.
\newblock {\em Bulletin of the London Mathematical Society}, 41(5):853--858,
  August 2009.

\bibitem[DLMLZ00]{dalla2000blocking}
Leoni Dalla, David~G. Larman, Peter Mani-Levitska, and Chuanming Zong.
\newblock The blocking numbers of convex bodies.
\newblock {\em Discrete \& Computational Geometry}, 24(2):267--278, 2000.

\bibitem[FT72]{fejestoth1972lagerungen}
L{\'a}szl{\'o} Fejes~T{\'o}th.
\newblock {\em Lagerungen in der {E}bene auf der {K}ugel und im {R}aum},
  volume~65 of {\em Grundlehren der mathematischen Wissenschaften}.
\newblock Springer-Verlag, Berlin--Heidelberg, second edition, 1972.

\bibitem[GL12]{gluskin2012vertex}
Efim~D. Gluskin and Alexander~E. Litvak.
\newblock A remark on vertex index of the convex bodies.
\newblock In {\em Geometric Aspects of Functional Analysis: Israel Seminar
  2006--2010}, pages 255--265. Springer, 2012.

\bibitem[HJ12]{Horn2012}
Roger~A. Horn and Charles~R. Johnson.
\newblock {\em Matrix analysis}.
\newblock Cambridge university press, 2012.

\bibitem[IN22]{ivanov2022quantitative}
Grigory Ivanov and M{\'a}rton Nasz{\'o}di.
\newblock A quantitative {H}elly-type theorem: containment in a homothet.
\newblock {\em SIAM Journal on Discrete Mathematics}, 36(2):951--957, 2022.

\bibitem[Iva21]{ivanov2021volume}
Grigory Ivanov.
\newblock On the volume of projections of the cross-polytope.
\newblock {\em Discrete Mathematics}, 344(5):112312, 2021.

\bibitem[Iva26]{ivanov2026maximalvolumeprojectionscrosspolytope}
Grigory Ivanov.
\newblock The maximal volume of projections of the cross-polytope.
\newblock \emph{arXiv preprint arXiv:2607.12072}, 2026.

\bibitem[Pr{\'e}71]{prekopa1971logarithmic}
Andr{\'a}s Pr{\'e}kopa.
\newblock Logarithmic concave measures with application to stochastic
  programming.
\newblock {\em Acta Scientiarum Mathematicarum}, 32:301--316, 1971.

\bibitem[Sam53]{sampford1953some}
Michael~R. Sampford.
\newblock Some inequalities on {M}ill's ratio and related functions.
\newblock {\em The Annals of Mathematical Statistics}, 24(1):130--132, 1953.

\bibitem[Tal61]{tallis1961moment}
Georges~M. Tallis.
\newblock The moment generating function of the truncated multi-normal
  distribution.
\newblock {\em Journal of the Royal Statistical Society Series B: Statistical
  Methodology}, 23(1):223--229, 1961.

\bibitem[Ver18]{vershynin2018high}
Roman Vershynin.
\newblock {\em High-dimensional probability: An introduction with applications
  in data science}, volume~47.
\newblock Cambridge university press, 2018.

\end{thebibliography}
\end{document}